\newtheorem{theorem}{Theorem}
\newtheorem{lemma}[theorem]{Lemma}
\theoremstyle{definition}
\newtheorem{example}[theorem]{Example}
\newtheorem{remark}[theorem]{Remark}
\newcommand{\Kernel}{\mbox{\rm Ker}\hspace{0.02in}}
\newcommand{\inj}{\mbox{\rm Inj}\hspace{0.02in}}
\newcommand{\sur}{\mbox{\rm Sur}\hspace{0.02in}}
\newcommand{\Cokernel}{\mbox{\rm Coker}\hspace{0.02in}}
\newcommand{\Range}{\mbox{\rm Range}\hspace{0.02in}}
\newcommand{\Index}{\mbox{\rm Index}\hspace{0.02in}}
\newcommand{\buletita}{{\tiny$\bullet$}}
\begin{document}
\title[On global inverse theorems]{On global inverse and implicit theorems\\Revised version}
\author{Olivia Gut\'u}
\date{\today}
\begin{abstract}
Since the Hadamard Theorem, several metric and topological conditions have emerged in the literature to date, yielding global inverse theorems for functions in different settings. Relevant examples are the mappings between infinite-dimensional Banach-Finsler manifolds, which are the focus of this work. Emphasis is given to the nonlinear Fredholm operators of nonnegative index between Banach spaces. The results are based on good local behavior of $f$ at every $x$, namely, $f$ is a local homeomorphism   or $f$ is locally equivalent to a projection. The general structure includes a condition that ensures a global property for the fibres of $f$, ideally expecting to conclude that $f$ is a global diffeomorphism or equivalent to a global projection.  A review of these results and some relationships between different criteria are shown. Also, a global version of Graves Theorem is obtained for a sui\-ta\-ble submersion $f$ with image in a Banach space: given $r>0$ and $x_0$ in the domain of $f$  we give a radius $\varrho(r)>0$, closely related to the hypothesis of the Hadamard Theorem, such that $B_{\varrho}(f(x_0))\subset f(B_r(x_0))$.
\end{abstract}
\maketitle
\tableofcontents

Let $f:\mathbb{R}^n\rightarrow \mathbb{R}^n$ be a differentiable mapping. Consider the nonlinear system $f(x)=y$. In his seminal article \cite{hadamard} of 1906, Hadamard establishes an existence and unicity condition for a nonlinear system $f(x)=y$ in terms of: $$\mu(x)=\min_{v\neq 0}\frac{\|\mbox{J}f(x)v\|_2}{\|v\|_2},$$
where $\mbox{J}f(x)$ is the Jacobian matrix of $f$ at $x$ and $\|\cdot\|_2$ is the Euclidean norm in $\mathbb{R}^n$. If $n=1$, Hadamard points out that if the first derivative is positive at every point $x\in \mathbb{R}^n$ then the nonlinear system has at most one solution, but its existence can not always be assured unless  $\int_{-\infty}^a f'(x)dx=\infty$ and $\int_{a}^\infty f'(x)dx=\infty$. For $n>1$, Hadamard asserts that it is not enough to replace the derivative $f'(x)$ in the above integrals by the Jacobian determinant at $x$, but the appropriate condition for global inversion is: 
$$\int_0^\infty \min_{\|x\|=\rho}\mu(x) d\rho=\infty$$
---referred to in this paper as the {\it Hadamard integral condition}--- provided that $\mu(x)>0$ for all $x\in \mathbb{R}^n$. Hadamard also conjectures the ``properness criterion'' established in the mid thirties by Banach and Mazur \cite{banachmazur} and by Cacciopoli \cite{cacciopoli}, which asserts that a map between Banach spaces is a proper local homeomorphism if and only if  $f$ is a global one. The properness condition has been widely reported and extended in the literature in different frameworks. The same can be said of the Hadamard integral condition and  similar me\-tric criteria in settings where a correct generalization of $\mu(x)$ can be established, sometimes devoted to special cases. The properness criterion was relaxed to closedness by 
Browder \cite{browder} in the context of topological spaces. Besides, the Hadamard Theorem was extended to the infinite-dimensional setting by L\'evy \cite{levy}, who considered the case of smooth mappings between Hilbert spaces. In the late sixties, John \cite{john} also obtained an extension of the Hadamard integral condition for nonsmooth mappings $f$ between Banach spaces in terms of the lower scalar Dini derivative of $f$. For the proof, he used the prolongation of local inverses of $f$ along lines.  Soon after, Plastock \cite{plastock} introduced a limiting property for lines, called {\it condition $L$}, analogous to the continuation property used by Rheinboldt \cite{rheinboldt} in a more abstract context. Plastock proved that a local homeomorphism $f$ satisfies condition $L$ if and only if it is a global homeomorphism. He also showed that the properness, closedness, and the Hadamard integral condition all imply the condition $L$.  Since then, condition $L$ has proved to be quite useful in global inversion theorems. In the same vein, Ioffe \cite{ioffe} extended the Hadamard Theorem in terms of the so-called surjection constant of the mapping $f$ making use of the condition $L$. More recently, this approach was used to give necessary and sufficient conditions for a map $f$ to be a global homeomorphism for a large class of metric spaces with nice local structure ---which includes Banach-Finsler manifolds \cite{oliviajesuscoverings}. As a consequence, an extension of the Hadamard Theorem is obtained in terms of a metric version of $\mu(x)$, a kind of lower scalar Dini derivate. In \cite{oliviajesusisabel} an estimation of the domain of invertibility around a point is provided for  a local homeomorphism between length metric space, inspired by the aforementioned  work of John \cite{john}.  In finite-dimensional case, analogous results were obtained by Pourciau \cite{porciau1, porciau2} by means of the Clarke generalized Jacobian of $f$, recently extended by Jaramillo {\it et al.} \cite{jesusyoscar} for locally Lipschitz mappings between finite-dimensional Finsler manifolds. The above proofs  rely somehow upon a monodromy argument  to  ensure the ``path lifting property'' of the homotopy theory, which finally leads us to conclude that $f$ is a global homeomorphism or more generally, a covering map.  

Since the early nineties, some global inversion conditions that test the effectiveness of the monodromy argument have  emerged. The crucial hypothesis is a variant of the Palais-Smale condition of a suitable function in terms of  $f$. In this regard, Katriel \cite{katriel} also considered the Ioffe surjection constant in order to obtain global inversion theorems in certain metric spaces by methods of critical point theory and based on an abstract mountain-pass theorem. The Katriel's technique has been used as an alternative tool to the monodromy argument to obtain global inverse function theorems in infinite-dimensional case; see for instance \cite{idzak1}. For functions between Euclidean spaces of the same dimension, Nollet and Xavier \cite{xaviernollet} improved the Hadamard integral condition using the Palais-Smale condition but with dynamical systems techniques. This work was recently extended in \cite{xaviernegativecurvature} for finite-dimensional manifolds. 

The monodromy argument also appears in the proof of the Ehresmann Theorem (mid-fifties) \cite{ehresmann} namely, if $X$ and $Y$ are finite-dimensional manifolds with $X$ paracompact and $Y$ connected then a proper submersion (i.e. $df(x)$ is onto for all $x\in X$) $f:X\rightarrow Y$   is a fibre bundle. In this case, we can talk about a ``horizontal path lifting property''. Ehresmann's technique was extended to Banach manifolds by Earle and Eells \cite{eells} by means of a condition in terms of the norm of  a right inverse of $df(x)$. In the late nineties,  Rabier \cite{rabier}  provided a more readily usable condition to ensure the validity of the Earle-Eells criterion. He introduced  the concept of strong submersion with uniformly split kernels for  mappings between Banach-Finsler manifolds, in terms of the corresponding analogue of $\mu(x)$. As  Rabier pointed out in his previous work \cite{rabier3} the strong submersion condition ``interpolates'' two well-known, but until that moment, unrelated hypotheses corresponding to the two extreme cases:  Hadamard's criterion when $f:X\rightarrow Y$ is a local diffeomorphism and the Palais-Smale condition when $Y=\mathbb{R}$. Similar results can be found in \cite{oliviajesusfibrations}. 

The purposes of the current work are the following.  First, to present a survey of such  results in a uniform framework including simpler proofs or adjustments for known theorems. Second, to establish some relationships between different known conditions. Finally, to present some new related results. All of this is done in the context of smooth functions between Banach-Finsler manifolds since these have tools to make the exposition more clear and intuitive, but are also general enough to include two examples of particular importance:  {\it Banach spaces} and  {\it Riemannian manifolds}. 

To this end, in Section 1  we introduce the terms {\it surjectivity} and {\it injectivity indicators} of a function. The surjectivity indicator is the extension of $\mu(x)$ given by Rabier \cite{rabier3,rabier} for Banach-Finsler manifolds. The injectivity indicator is proposed by the author in order to provide context for other global inversion conditions found in the literature. Roughly speaking, if the surjectivity indicator is positive then the function is locally as a projection and if the injectivity indicator is positive then the map is locally as  an embedding. Of course, if both are positive then $f$ is a local homeomorphism.  A brief summary of local inversion for nonlinear Fredholm operator is also presented, as they are natural prospects in this context. Section 2 is devoted to the study of the conditions for a local homeomorphism to be a global one; emphasizing the case where the spaces are Banach spaces and Riemannian manifolds. In the first instance, the purely topological characterizations of global homeomorphisms between Banach manifolds are revisited. A simple proof that the continuation property for minimal geodesics characterize the covering maps between Riemannian manifolds is given. Furthermore, if $f$ is a function with image in a Cartan-Hadamard manifold we show that the domain where the inverse of $f$ is defined is an open star shaped set. The corresponding statements are given in Lemma \ref{cartan-hadamardimages} and Lemma \ref{riemanniancase}, respectively. The rest of the section is devoted to the $C^1$ case. Since for local diffeomorphisms the surjection constant of Ioffe and the lower Dini derivative both coincide with the indicators, the global inversion criteria mentioned above can be expanded or adapted to this framework. We show that these results can be deduced from Earle-Eells criterion for a submersion to be a fibre bundle. For the case where the submersion is a local diffeomorphism this condition characterizes global diffeomorphisms  ---and coincides with the concept of strong submersion of Rabier aforementioned--- provided that the codomain is simply connected. A simplified proof of this fact is presented, see Theorem \ref{firstcharacterization} and Lemma \ref{lemmafinitefibre}. The limits of the monodromy argument are also evidenced, which apparently are not adequate in more refined criteria. Subsequently, the relationships between different conditions are given e.g. in the case of functions between Banach spaces, see Figure \ref{relationshipbanach} and  Lemma \ref{lemaconectionkatrielps}. At the end of the second section, a charaterization of global diffeomorphisms is presented in terms of a weighted version of an Earle-Eells condition. Section 3 deals with the study of the topological and metric conditions provided in Section 2, but for submersions between Banach manifolds. Finally, we obtain a sort of global Graves Theorem in terms of a surjectivity indicator for mappings with uniformly split kernels (Theorem \ref{globalgraves}). Most technical proofs are presented in the appendix.

\section{Preliminary definitions}

\subsection{Banach manifolds}
Let $X$ be a topological space. An {\it atlas} of class  $C^k$ ($k\geq 1$) on $X$ is a collection of pairs $(W_i,\varphi_i)$ ($i$ ranging in some indexing set) satisfying the following conditions: each $W_i$ is a subset of $X$ and the $W_i$ cover $X$; each $\varphi_i$ is a homeomorphism of $W_i$ onto an open subset $\varphi_i(W_i)$ of some Banach space $E_i$, $\varphi_i(W_i\cap W_j)$ is open in $E_i$ for any $i,j$, and
\begin{enumerate}
\item[(*)] the overlap map $\varphi_j\varphi_i^{-1}:\varphi_i(W_i\cap W_j)\rightarrow \varphi_j(W_i\cap W_j)$ is of class $C^k$ for each pair of indices $i,j$.
\end{enumerate}
Each pair $(W_i,\varphi_i)$ is called a {\it chart} of the atlas. Suppose that $\varphi:W\rightarrow W'$ is a homeomorphism onto an open subset of some Banach space $E$. The pair $(W,\varphi)$ is {\it compatible} with the atlas $\{(W_i,\varphi_i)\}$ if each map $\varphi_i\varphi^{-1}$ defined on a suitable intersection as in $(*)$ is a $C^k$ homeomorphism. Two atlases are compatible if each chart of one is compatible with the other atlas. The relation of compatibility between atlases is an equivalence relation. An equivalence class of atlases of class $C^k$ on $X$  defines a structure of $C^k$-manifold on $X$. If all the Banach spaces $E_i$ in some atlas are homeomorphic then we can find an equivalent atlas for which they are all equal,  say to a Banach space $E$. We then say that $X$ is a $C^k$ manifold {\it modeled} in a Banach space $E$, so can be assumed that all charts have image in $E$. If $E$ is a real Banach space then $X$ is said to be a real $C^k$ manifold modeled in $E$.

Let $X$ be a manifold of class $C^k$ modeled in a Banach space $E$ and let $x\in X$. Consider triples $(W,\varphi,w)$ where $(W,\varphi)$ is a  chart at $x$ and $w$ is an element of $E$. Two triples $(W_1,\varphi_1,w_1)$ and $(W_2, \varphi_2, w_2)$ are equivalent if $d\varphi_2\varphi_1^{-1}(\varphi_1(x))w_1=w_2$. An equivalence class of such triples is called a {\it tangent vector} of $X$ at $x$. The set of such tangent vectors is called the {\it tangent space} of $X$ at $x$ and is denoted by $T_xX$. Each chart $(W,\varphi)$ at $x$ determines a bijection of $T_xX$ onto $E$, namely, the equivalence class of $(W,\varphi,w)$ corresponds to the vector $w$. By means of such a bijection it is possible to transport to $T_xX$ the vector space structure of $E$. Of course, this structure is independent of the chart selected.

Let $X$ and $Y$  be two manifolds modeled in $E$ and  $F$, respectively. A map $f:X\rightarrow Y$ is said to be of class $C^k$  ($k\geq 1$) if given $x\in X$ there exists a chart $(W,\varphi)$ at $x$ and a chart $(U,\psi)$ at $y=f(x)$ such that $f(W)\subset U$ and the function $\psi f\varphi^{-1}:\varphi(W)\rightarrow \psi(U)$ is of class $C^k$. The derivative of $f$ at $x$ is the unique linear map $df(x):T_xX\rightarrow T_yY$ having the following property. If $v$ is a tangent vector at $x$ represented by $w\in E$  in the chart $(W,\varphi)$ then $df(x)v$ is  the tangent vector at $y$ represented by $d\psi f\varphi^{-1}(\varphi(x))w\in F$; see  chapter II of \cite{lang} for more details.

\subsection{Finsler metrics}
Let $X$ be a real $C^k$ manifold modeled in a  Banach space $(E,|\cdot|)$. As usual, $TX=\{(x,v):x\in X\mbox{ and } v\in T_xX\}$ is the tangent bundle of $X$.  If $(W,\varphi)$ is a chart of $X$, then there is a local trivialization of the natural projection $\pi:TX\rightarrow X$ over $W$, namely a  bijection $TW=\pi^{-1}(W)\rightarrow W\times E$ which commutes  with the projection on $W$. For every $w\in E$ and $x\in W$, there is a unique pair $(x,v)\in TW$ where $v_x(w)=d\varphi^{-1}(\varphi(x))w$ is the tangent vector at $x$ represented by $w$ in the chart $(W,\varphi)$. A Finsler structure on $TX$ is a continuous map $\|\cdot\|_X:TX\rightarrow[0,\infty)$ such that:

\begin{enumerate}
\item For every $x\in X$ the map $v\mapsto \|\cdot \|_x:=\|(x,v)\|_X$ is an admissible norm for the tangent space $T_xX$. Namely,  for every chart $(W,\varphi)$ at $x$ the map $\|v_x(\cdot)\|_x$ is a norm  equivalent to $|\cdot|$ on $E$.
\item For every $x_0\in X$ and $k> 1$ there exists a chart $(W,\varphi)$ of $X$ at $x_0$ (depending on $k$)  such that for every $x\in W$ and every $w\in E$:
$$k^{-1}\|v_{x_0}(w)\|_{x_0}\leq \|v_{x}(w)\|_x\leq k\|v_{x_0}(w)\|_{x_0}.$$
\end{enumerate}
A {\it Finsler manifold} is a Banach manifold endowed with a Finsler structure on its tangent bundle. This definition corresponds to Palais  \cite[p. 117]{palais}. The relationship of  the above definition with alternative definitions of Finsler manifolds in the literature is presented in Jim\'enez-Sevilla {\it et al.} \cite{jimenezsevilla}. Since all the results  mentioned  here are valid for Finsler manifolds in the Palais sense {\it we shall assume that all Finsler manifolds are in the Palais sense}. Although some results can be applied to more general Finsler manifolds, for the purpose of this work this point is not relevant.

Every paracompact manifold admits a Finsler structure.  If in addition $X$ is modeled in a real separable Hilbert space $(H,\langle\cdot,\cdot\rangle)$ then it admits a Riemannian metric $g$ \cite[p. 175]{lang}. Given a chart $(W,\varphi)$ by means of the local trivialization of $\pi$ we can transport the metric $g$ to $W\times H$. In a local representation this means that for each $x\in W$ we can identify the inner product $g_x$  on  $T_xX$ with a strictly positive operator $A_x:H\rightarrow H$ such that for every $w\in H$, $g_x(v,v)=\langle A_x w,w\rangle$ where $v=v_x(w)$. The metric $g$ is ``smooth'' at $x_0\in X$ in the sense that for every $\epsilon>1$, there is a chart $(W,\varphi)$ of $x_0$ such that  for every $x\in W$ and $w\in E$ \cite{palais}: $g_{x}(v,v)\leq \epsilon^2 g_{x_0}(v,v)$ and $g_{x_0}(v,v)\leq \epsilon^2 g_{x}(v,v)$. In particular, the function $\|(x,v)\|^2=g_x(v,v)$ defines a Finsler structure on $TX$. If $X=(E,|\cdot|)$ is a Banach space then the identity chart can be considered and the function $\|(x,v)\|=|v|$ defines trivially a Finsler structure on $TX=E\times E$. 

Let $X$ be a $C^1$ Finsler manifold. The {\it length} of a  $C^1$ path $\alpha:[a,b]\rightarrow M$ is defined as $\ell(\alpha)=\int_a^b\|\dot \alpha(t)\| dt$. If $X$ is connected, then it is connected by $C^1$ paths and we can define the associated {\it Finsler metric}: $$d_X(x,x')=\inf\{\ell(\alpha):\alpha \mbox{ is a  $C^1$ path connecting $x$ to $x'$} \}.$$ Thereby $(X,d)$ is in particular a metric length space. The Finsler metric is consistent with the topology given in $X$ and is said to be {\it complete} if it is a complete metric space with respect to the metric $d_X$. From now on, unless otherwise noted, {\it we shall assume that all the manifolds are paracompact, at least $C^1$, and without boundary} in order to simplify the arguments. 

\subsection{Injectivity and surjectivity indicators and Fredholm maps.}
Let $X$ and $Y$ be Banach spaces and let $T:X\rightarrow Y$ be a bounded linear operator. 
As it is known, $T$ is one-to-one if:
$$\inj T:=\inf_{|v|=1}|Tv|>0,$$
and $T$ is onto $Y$ if and only if:
$$\sur T:=\inf_{|v^*|=1}|T^*v^*|>0.$$
In this paper the non negative numbers $\inj T $  and $\sur T$ are called {\it injectivity indicator} and {\it surjectivity indicator} of $T$, respectively. By the Bounded Inverse Theorem the operator $T$ is a linear isomorphism if and only if both indicators are positive. In this case $\|T^{-1}\|=\|{T^{-1}}^*\|=\|{T^*}^{-1}\|$ so: $$\inj T =\sur T = \|T^{-1}\|^{-1}.$$
Consider now the {\it linear} system
$
T(x)=y.
$ The dimension of the quotient space $\Cokernel T= Y/\Range T$ provides a number showing the extent to which the above system can fail to have a solution. Besides, the dimension of the $\Kernel T$ provides a number of the extent to which the system can fail to have a unique solution if it has any solution. Recall, $T$ is a {\it Fredholm operator} if $\dim \Kernel T<\infty$ and $\dim \Cokernel T<\infty$.  The {\it index} of the linear map $T$ is the integer: $$\Index T=\dim \Kernel T-\dim  \Cokernel T.$$ If $T$ is a Fredholm operator then $\Range T$ is closed in $Y$ \cite[p. 156]{yuri}. Of course, a desirable situation is when $T$ is invertible; in this case $\dim \Kernel T=0$ and $\dim \Cokernel T=0$ thus $\Index T=0$. 

Now, let $f:X\rightarrow Y$ be a $C^1$ map between  connected Banach manifolds. Consider the {\it nonlinear} system $f(x)=y$. In \cite{smale} Smale introduced a nonlinear version of Fredholm operators in order to establish a infinite-dimensional version of Sard's Theorem.  The function $f$ is a (nonlinear) {\it Fredholm map} if for each $x\in X$ the derivative $df(x):T_xX\rightarrow T_{f(x)}Y$ is a Fredholm operator. The {\it index of $f$} is defined to be the index of $df(x)$ for some $x$. Since $X$ is connected the definition doesn't depend on $x$. For example, a differentiable map $f:\mathbb{R}^n\rightarrow\mathbb{R}^m$ is Fredholm with positive, negative, or zero index if $n>m$, $n<m$, or $n=m$, respectively.

A point $x\in X$ is called a {\it regular point} if $df(x)$ is surjective  and  {\it singular} or {\it critical} if it is not regular. An image of the critical point under $f$ is called {\it critical value} otherwise  {\it regular value}. Note that if  $f^{-1}(y)=\emptyset$ then $y$ is indeed a regular value.

Suppose that $X$ and $Y$ are endowed by a Finsler structure. Let $x\in X$ and $y=f(x)$. The injectivity indicator of $df(x):T_xX\rightarrow T_{y}Y $ can be defined as:
$$\inj df(x)=\inf_{\|v\|_x=1}\|df(x)v\|_{y}$$
Here $\|\cdot\|_x$ represents the Finsler structure of $TX$ restricted to $T_xX$ and $\|\cdot\|_{y}$ is the Finsler structure of $TY$ restricted to $T_{y}Y$.  In the same way, the surjectivity indicator of $df(x)$ can be defined as:
$$\sur df(x)=\inf_{\|v^*\|_{y}=1}\|df(x)^*v^*\|_{x}$$
In this case $\|\cdot\|_x$ and $\|\cdot\|_{y}$ represent the dual norms on $(T_xX)^*$ and  $(T_{y}Y)^*$, respectively.  If $f:X\rightarrow Y$ is a $C^1$ Fredholm map of index $0$ then the following statements are equivalent:
\begin{enumerate}
\item[\buletita] $f$ is a local diffeomorphism at $x$.
\item[\buletita] $\sur df(x)>0$.
\item[\buletita] $\inj df(x)>0$.
\end{enumerate}
Furthermore, if one of these statements is true then: $$\sur df(x)=\inj df(x)=\|df(x)^{-1}\|^{-1}.$$
Indeed, let $T=df(x)$. Since $X$ and $Y$ are Finsler manifolds then $(T_xX,\|\cdot\|_x)$ and $(T_yY,\|\cdot\|_y)$ are both Banach spaces and $T:T_xX\rightarrow T_yY$ is a linear Fredholm map of index $0$. If $\sur T>0$ then $T$ is onto. Therefore $T$ is injective, so $\inj T>0$. In the same way we conclude that $\inj T>0$ implies $\sur T>0$. The equivalences follow from  the Inverse Mapping Theorem. Note that {\it $f$ is a local diffeomorphism between connected manifolds if and only if it is a Fredholm map of index $0$ without critical points.} 

\subsubsection{Invariance of domain property}\label{sectioninvariancedomain} The classical Brouwer Theorem on invariance of domains states that if $U\subset\mathbb{R}^n$ is an open set and  $f:U\rightarrow\mathbb{R}^n$ is a continuous injective map then $f(U)$ is an open set in $\mathbb{R}^n$. Consequently, a locally injective mapping from $\mathbb{R}^n$ to $\mathbb{R}^n$ is a local homeomorphism.
However, in general this is not longer true  for mappings between infinite-dimensional Banach spaces \cite{krikorian}. Nevertheless, an important consequence from the degree theory  is the following: {\it Let $f$ be a Fredholm map of index $0$ between connected Banach manifolds. If  $f$ is a locally injective map then it  is an open map.} In particular, $f$ is a local homeomorphism. So an ``invariance-of-domain'' property holds for these operators even in infinite dimensions  \cite{tromba}.  This result is applicable even if there are certain types of critical points. 

On the other hand, the ``invariance of domain''  property can be extended to locally compact perturbations of nonlinear Fredholm maps of index $0$. More precisely: if $X$ and $Y$ are Banach spaces, $U$ is an open subset of $X$, $f+k:U\rightarrow Y$ is a locally injective map where $f$ is a Fredholm map of index $0$ and $k$ is continuous locally compact  function, then $f+k$ is an open map \cite{calamai}. A standard argument of composition with charts is sufficient to check that this result holds if  $f + k$ is defined on a connected smooth Banach manifold with image in a Banach space. A point to be considered is that differentiability is not required for $k$. We recall that a map between two topological spaces is {\it locally compact} if any point in its domain has a neighborhood whose image has compact closure. Therefore,  the Schauder's theorem on invariance of domain for compact perturbation of the identity is a special case of the property  above \cite{schauder}. See also \cite{kim, mawhin} and references therein.

As might be expected, it should be noted that these theorems are only valid for the index-zero case. Indeed, if $f$ is a Fredholm map with negative index then the image of $f$ has empty interior. On the other hand, there are no locally injective Fredholm maps with a positive index.

\section{Global homeomorphism theorems}

\subsection{The path lifting property revisited}\label{topologicalcharacterization}
Keep in mind again the equation $f(x)=y$ now with $f:X\rightarrow Y$ a local homeomorphism   between  topological spaces. An important issue of algebraic topology is the ``lifting problem''. Let $I=[0,1]$ and let $p:I\rightarrow Y$ be a continuous path in $Y$ such that $p(0)\in f(X)$. The lifting problem for $f$ is to determine whether there is a continuous path $q:I\rightarrow X$, so-called {\it lifting} of $p$, such that $f\circ q=p$. If it so, it is said that $f$ {\it lifts} the path $p$. Furthermore, $f$ is said to have the {\it  path lifting property} if:
\begin{enumerate}[label={\bf (C1)}]
\item \label{pathlifing} $f$ lifts every continuous path in $Y$ with starting point at $f(X)$.
\end{enumerate} 

 The path lifting property is directly related to the global behavior of a set of solutions of $f(x)=y$ for all $y\in Y$. For example, if $Y$ is path-connected then the path lifting property implies at once that  $f$ is onto, so the nonlinear system always has a solution. In this vein, in the mid-fifties Browder  \cite{browder} established a remarkable result in the general context of Hausdorff topological  spaces $X$ and $Y$ with extra suitable local connectedness  and separation conditions ---including paracompact Banach manifolds--- which asserts that if a local homeomorphism $f:X\rightarrow Y$ has the path lifting property then it is a {\it covering projection}, namely,  $f$ is onto and for each point $y\in Y$ there exists a neighborhood $V$ of $y$ such that $f^{-1}(V)$ is the union of a disjoint family of open sets of $X$, each of which is mapped homeomorphically onto $V$ by $f$. The space $X$ is called the {\it covering space} and the space $Y$ the {\it base space}. The converse of the Browder result is also true, since every covering projection has the path lifting property; see Section 2.2 of \cite{spanier}. If $f$ is a covering projection then the set of the solutions $f^{-1}(y)$ of the nonlinear problem $f(x)=y$  ---that is, the {\it fibre} over $y$--- is a discrete set and all the fibres are homeomorphic if $Y$ is path-connected \cite[p. 73]{spanier}, so we can speak of {\it the} fibre of $f$. 
 
Via the path lifting property, Browder proved that {\it a closed local homeomorphism is a covering map and each fibre of $f$ is a finite set}. This last statement in italics is usually referred as the Browder Theorem.  Recall that, a function $f:X\rightarrow Y$ is said to be {\it closed} if:
\begin{enumerate}[label={\bf (C2)}]
\item \label{closed} The image of any closed set of $X$ is closed in $Y$. 
\end{enumerate}  
The idea of the proof lies in the fact that if $f$ is a local homeomorphism and $p$ is a path in $Y$ beginning at $f(x_0)$ for some point $x_0\in X$:
\begin{enumerate}
\item[\buletita] there is at most one lifting of  $p$ beginning at $x_0$;
\item[\buletita] there always exists a lifting of $p$ locally.
\end{enumerate}
If $f$ is closed then the local lifting of $p$ can be extended to whole interval $I=[0,1]$, so $f$ lifts the path $p$. Obviously not every covering map is a closed map. However, Browder gives a characterization (Theorem 5 of \cite{browder}) of the covering maps in terms of the following condition, called in the current paper the {\it Browder condition}:
\begin{enumerate}[label={\bf (C3)}]
\item \label{browder} For every $y\in Y$ there exists a neighborhood $V$ such that $f$ is a closed mapping on each component of $f^{-1}(V)$ into $V$.
\end{enumerate}

In the late sixties, the article of Rheinboldt \cite{rheinboldt} appeared in the literature where a general theory  is established for global implicit function theorems in terms of the  {\it continuation property}. The continuation property was influenced in part by the so-called continuation method in numerical analysis. A local mapping relation \cite[p. 184]{rheinboldt} ---e.g.  a continuous map--- $f:X\rightarrow Y$ is said to have the continuation property for a subset $P$ if:
\begin{enumerate}[label={\bf (C4)}]
\item \label{rheinboldtc} For any $p\in P$ and any local lifting $q$ of $p$ defined on $[0,\varepsilon)\subset [0,1]$ there exists an increasing sequence $t_n\rightarrow \varepsilon$ such that $\{q(t_n)\}$ converges in  $X$ (in a topological sense).
\end{enumerate}
As Rheinboldt pointed out, this definition represents a simple modification of the path lifting property for  paths in the set $P$. It is therefore not surprising that in fact they are equivalent \cite[p. 185]{rheinboldt}. In particular, the continuation property guarantees the existence of the lifting defined on whole interval $I=[0,1]$.
Various applications to global implicit and inversion theorems are presented in \cite{rheinboldt} essentially for normed linear spaces where $P$ is taken as the  set of all smooth paths on $Y$. As discussed below, in this case much more can be said.

Relatively recently, Gut\'u and Jaramillo \cite{oliviajesuscoverings} presented an extension of some well known results in the framework of metric spaces where the continuation property plays a central role. From  Example 2.2 and Theorem 2.6 of \cite{oliviajesuscoverings} we can deduce that if $Y$ is a connected $C^k$ Banach manifold ---equivalently $C^k$ path-connected \cite[p. 118]{palais}--- then it is enough to consider the set $P$ as the set of the $C^k$ paths in a connected $Y$. More precisely, let $X$ and $Y$ be Banach manifolds, assume $Y$ is connected of class $C^k$ where $1\leq k\leq\infty$,  if $f:X\rightarrow Y$ is a local homeomorphism then following statements are equivalent:
\begin{enumerate}
\item[\buletita]  $f$ has the continuation property for the set of all  $C^k$ paths in $Y$.
\item[\buletita] $f$ is a covering map.
\end{enumerate}

In order to make a clear connection with later results and because the idea is quite simple, we present below the sketch of the proof. The demonstration is based on the ideas in \cite{plastock}, in turn based on the theory of covering spaces of differential geometry.

\

\noindent{\sl Sketch of the proof.} Let $y\in Y$. There exists a  $C^k$ path $p$ joining  some point in $f(X)$ to $y$ since $Y$ is connected and of class $C^k$. Therefore, since $f$ is a local homeomorphism there exists a local lifting $q$ of $p$. As $f$ has the continuation property for the $C^k$ paths then the local lifting can be extended to whole $I=[0,1]$ and $f(q(1))=y$. So, $f$ is onto. Let $(V_y,\psi)$ be a $C^k$ chart centered at $y$ i.e. $\psi(V_y)$ is an open ball in a Banach space centered at $0=\psi(y)$. For every $z\in V_y$ there exists a line segment relative to $\psi$, $p_z(t)=\psi^{-1}(t\psi(z))$, which is a $C^k$ path joining $y$ to $z$. For any $u\in f^{-1}(y)$, as before, there exists lifting $q_z$ of $p_z$ starting at $u$ defined in whole $I$. The lifting is unique since $f$ is a local homeomorphism. The continuity of the map $(t,z)\mapsto p_z(t)$ implies that the sets $O_u=\{q_z(1):z\in V_y\}$ form a disjoint family of open sets  such that  $f^{-1}(V_y)=\bigcup_{u\in f^{-1}(y)}O_u$  and each $O_u$ is homeomorphic onto $V_y$ by $f$. See Remark \ref{conexidadimplicasobre}.

\

Suppose that  $X$ and $Y$ are $C^1$ Banach manifolds, not necessarily connected. It is well known that if $f$ is a closed mapping and $f^{-1}(y)$ is compact then $f$ is a proper map \cite[p. 119]{lee}. Recall, a function between topological spaces $f:X\rightarrow Y$ is said to be {\it proper} if:
\begin{enumerate}[label={\bf (C5})]
\item\label{proper} The preimage of each compact set in $Y$ is compact in $X$.
\end{enumerate}
On the other hand, in this context, every proper map $f:X\rightarrow Y$ is closed \cite{palais2}. Besides, any constant map over $\mathbb{R}$ gives us a simple example of a non-proper closed map. However, for connected manifolds, if  $f$ is a  Fredholm closed map and $\dim X=\infty$ then $f$ is also a proper map \cite{smale}. Moreover, if $X$ and $Y$ are both infinite-dimensional then every continuous non-constant closed map is proper \cite{sabyrkhanov}. Nevertheless, according to the Browder Theorem, regardless of the dimension and connectedness of $X$, {\it  if $Y$ is connected and  $f$ is a local homeomorphism then $f$  is a closed map if and only if it  is a proper map}. In this case,  $f$ is a covering projection with finite fibre. In order to close the circle, note that {\it if $f$ is a covering projection with finite fibre then it is a closed map}.  %Let $C$ be a closed subset of $X$. Let $y$ in the complement of $f(C)$. Then each fibre $f^{-1}(y)=\{x_1,\cdots, x_n\}$ is contained in the open set $X\setminus C$. There exists $V_y$ neighborhood of $y$ such that $f^{-1}(V_y)$ is the union of open sets $U_i$ such that $x_i\in U_i$, $i=1,\dots, n$. Each $x_i$ has a neighborhood $W_i\subset(U_i\cap(X\setminus C))$. Each set $f(W_i)$ is open since $f$ is an open map, then $B=\cap_{i=1}^mf(W_i)$ is also an open set in $Y$ and $y\in B$. Finally,  it is easy to check that $B$ is contained in the complement of $f(C)$.  

By the above arguments, a local homeomorphism $f$  is a covering map provided it is {\it weakly proper} map, namely:
\begin{enumerate}[label={\bf (C6)}]
\item \label{weaklyproper} For every compact  $K\subset Y$ each  component of $f^{-1}(K)$ is compact in $X$.
\end{enumerate}
In fact, if it is so, we can also apply a standard monodromy argument to get a global lifting from the local one. Clearly, condition \ref{weaklyproper} does not characterize the covering projections, for example $f(t)=\exp(2\pi it)$ is a covering map but it is not weakly proper. 

\begin{remark}
It is well known that if a continuous map $f$ is a covering projection then it induces a monomorphism $f_\star:\pi_1(X)\rightarrow\pi_1(Y)$ defined by $f_\star[\omega]=[f\omega]$. So a covering projection $f$ is a homeomorphism if and only if $f_\star\pi_1(X)=\pi_1(Y)$ \cite[p. 77]{spanier}. This occurs for example if $Y$ is simply connected. In this case, if $df(x)$  is a linear isomorphism for all $x\in X$ then $f$ is a global diffeomeomorphism onto $Y$. The reader can consult \cite[p. 47]{ambrosetti} for a direct and elementary proof ---avoiding passing through the monomorphism $f_\star$--- of the so called {\it mo\-no\-dromy theorem}. This classical result relates to the claim that every proper local homeomorphism $f:X\rightarrow Y$ between metric spaces is a global homeomorphism if  $X$ arcwise connected and $Y$ simply connected.  
\end{remark}

Summing up, let $1\leq k\leq\infty$ and let $X$ and $Y$ be Banach manifolds, assume $Y$ is of class $C^k$ and connected, if $f:X\rightarrow Y$  is a local homeomorphism then the following conditions are equivalent to $f$ being a covering projection:
\begin{itemize}
\item[\buletita] $f$ satisfies the Browder condition.
\item[\buletita] $f$ has the path lifting property.
\item[\buletita] $f$ has the continuation property for the set of all  $C^k$ paths in $Y$.
\end{itemize} 
Furthermore, the following conditions are equivalent to $f$ being a covering projection {\it with finite fibre}:
\begin{enumerate}
\item[\buletita] $f$ is a closed map.
\item[\buletita] $f$ is a proper map.
\end{enumerate} 
Finally, $f$ is a covering map provided that:
\begin{enumerate}
\item[\buletita] $f$ is a weakly proper map.
\end{enumerate}
If $Y$ is simply connected then all conditions above are equivalent to $f$ being a global homeomorphism.  The special cases are detailed in the next subsection.

\subsubsection{Banach spaces, Riemannian and Cartan-Hadamard Finsler manifolds}
Independently of Rheinboldt \cite{rheinboldt}, Plastock  \cite{plastock} introduced the {\it condition $L$} for local homeomorphism $f:X\rightarrow Y$ between Banach spaces. The condition $L$  is just the continuation property for the subset $P$  of all the lines  $l_z(t)=y(1-t)+zt$ in $Y$. Plastock sets that {\it a local homeomorphism between Banach spaces is a global one if and only if it satisfies condition $L$}. The proof is a special case of the sketch of the proof given above with $(V,\psi)=(B_r(y),{\rm id_Y}-y)$ for some $r>0$.  

Plastock's result may even be improved, since it is enough to lift only the line segments from a fixed point in the image of $f$. Indeed, following the ideas of John  \cite{john}, suppose that $y_0=f(x)$ for some $x\in X$. If $f$ is a local homeomorphism then there exists a neighborhood $V$ of $y_0$ and an  inverse  defined on $V$. This local inverse can be continued, as far out as possible, by a monodromy process determined by the uniqueness of the continuations. So, a global inverse $f^{-1}_{x}(z):=q_z(1)$ can be constructed on a maximal star  $S_{y_0}$ defined as the set of all $z\in Y$ for which there exists a lifting $q_z$, starting at $x$, of the line  $p_z$ joining $y_0$ to $z$. As John proved, the set $S_{y_0}$ is open and the mapping $f^{-1}_{x}$ is an inverse of $f$ with domain $S_{0}$. Therefore, {\it $f$ is a global homeomorphism if and only $S_{y_0}=Y$ if and only if $f$ lifts the lines $l_z(t)=(1-t)y_0+tz$ for all $z\in Y$}. In this vein, a map $f$ is said to be {\it ray-proper} at $y_0$ if:
\begin{enumerate}[label={\bf (C7)}]
\item \label{rayproper} The pre-image of  the line joining $y_0$ and $z$ is compact for any $z\in Y$ and some $y_0\in Y$. 
\end{enumerate}
Note that a local homeomorphism ray-proper  at some $y_0\in f(X)$ clearly satisfies condition \ref{rheinboldtc} for the set $P$ of rays from $y_0$. A continuous map $f:X\rightarrow Y$ between Banach spaces is proper if and only if it is closed and ray-proper. Furthermore, it not difficult to construct a  ray-proper map which is not proper \cite[pp. 68--72]{nonlinearspectraltheory}. But, if $f$ is a local homeomorphism between Banach spaces e.g. a locally injective Fredholm map of index $0$, then the following conditions are equivalent:
\begin{enumerate}
\item[\buletita] $f$ is ray-proper map at $y_0\in f(X)$.
\item[\buletita] $f$ is a closed map.
\item[\buletita] $f$ is a proper map.
\item[\buletita] $f$ satisfies condition $L$.
\item[\buletita] $f$ has the continuation property for  all lines from $y_0$.
\item[\buletita] $f$ is a global homeomorphism.
\end{enumerate}
 
\begin{remark}\label{wazewski}
Note that if $f$ is a local diffeomorphism then the liftings of the lines $l_w=f(x)+tw$ are given by  the flow of the differential equation $\dot q(t)=df(q(t))^{-1}w$ ---namely, Wa\.zewski equation---  with the initial condition $q(0)=x$ cf.  \cite{demarco}. % De Marco {it et al. } consider instead the differential equation $\dot x=-df(x)^{-1}(f(x)-f(x_0))$. For this equation, the point $x_0$ is asymptotically statable equilibrium and its attraction basin coincide with the maximal open subset of $X$ where $f$ is injective and the image of $f$ under this set is star-shaped with respect to $f(x_0)$. They conclude that  the flow of the last differential equation is a global dynamical system if and only if $f$ is a global diffeomorphism.
\end{remark}

We claim that the ideas of John can be used to obtain this result  for Cartan-Hadamard manifolds. Recall, in a traditional sense, a Cartan-Hadamard manifold is a Riemannian manifold $(Y,g)$ which is complete, simply connected and with semi-negative curvature; see \cite[p. 235]{lang} for a precise definition.  Because $Y$ is complete, the exponential map $\mbox{exp}_y$ is defined on all $T_yY$ for all $y\in Y$. By the Cartan-Hadamard Theorem the function  $\mbox{exp}_{y}:T_{y}Y\rightarrow Y$ is a global diffeomorphism.  The definition of semi-negative  curvature and the Cartan-Hadamard Theorem can both be extended for some finite and infinite-dimensional Finsler manifolds, namely  {\it Finsler manifolds with spray}, according to the Neeb definition \cite{neeb} which includes: Banach spaces, Riemannian manifolds, and the finite-dimensional Finsler manifolds called Berwald spaces.  In a Cartan-Hadamard manifold  every two points can be joined by a {\it unique} minimal geodesic \cite{neeb}. That is, every Cartan-Hadamard manifold is a {\it uniquely geodesic} space. Surprisingly, not every Banach space is a Cartan-Hadamard manifold since a Banach space is uniquely geodesic if and only if its unit ball is strictly convex; see Proposition 1.6 of \cite{bridson}. For example, $\ell_1$ or $\ell_\infty$ can not be Cartan-Hadamard manifolds.

Let $f:X\rightarrow Y$ be a local homeomorphism. Assume that $X$ is a Banach manifold and  $Y$ is a Cartan-Hadamard Finsler manifold. Let $y_0\in f(X)$ and let $p_z$ be the unique minimizing geodesic segment joining $y_0$ to $z$ in $Y$. As before,  $S_{y_0}$ is the star with vertex $y_0$ defined as the set of all $z\in Y$ for which there is a lifting $q_z$ of $p_z$ such that $q_z(0)=x$. Let $f_x^{-1}(z):=q_z(1)$. The set {\it $S_{y_0}$ is open and the mapping $f_x^{-1}$ is an inverse of $f$ with domain $S_{y_0}$}; see Lemma \ref{cartan-hadamardimages} in the appendix. In particular, the following statements are equivalent:
\begin{enumerate}
\item[\buletita] $f$ has the continuation property for  all minimal geodesics from $y_0$.
\item[\buletita] $f$ is a global homeomorphism.
\end{enumerate}
Finally, if $(Y,g)$ is a Riemannian manifold then for every $y\in Y$ there exists $r>0$ sufficienty small such that $\exp_y:B_g(0,r)\rightarrow B_g(y,r)$ is a diffeomorphism. So if we proceed as before, but with  the paths $p_z(t)=\exp_y(t\exp_y^{-1}(z))$ we can deduce the following fact. Let $f:X\rightarrow Y$  be a local homeomorphism between Banach manifolds, assume $Y$ is Riemannian and connected, thus the following statements are equivalent:
\begin{enumerate}
\item[\buletita] $f$ has the continuation property for the set of minimal geodesics in $Y$.
\item[\buletita] $f$ is a covering map.
\end{enumerate}
This fact can be generalized to manifolds admiting a definition of an exponential map such that $\exp_y$ is a local diffeomorphism  at $0\in T_yY$, an unclear fact in the infinite-dimensional Finsler setting \cite[p. 120]{neeb}. See proof of Lemma \ref{riemanniancase} in the appendix for details.

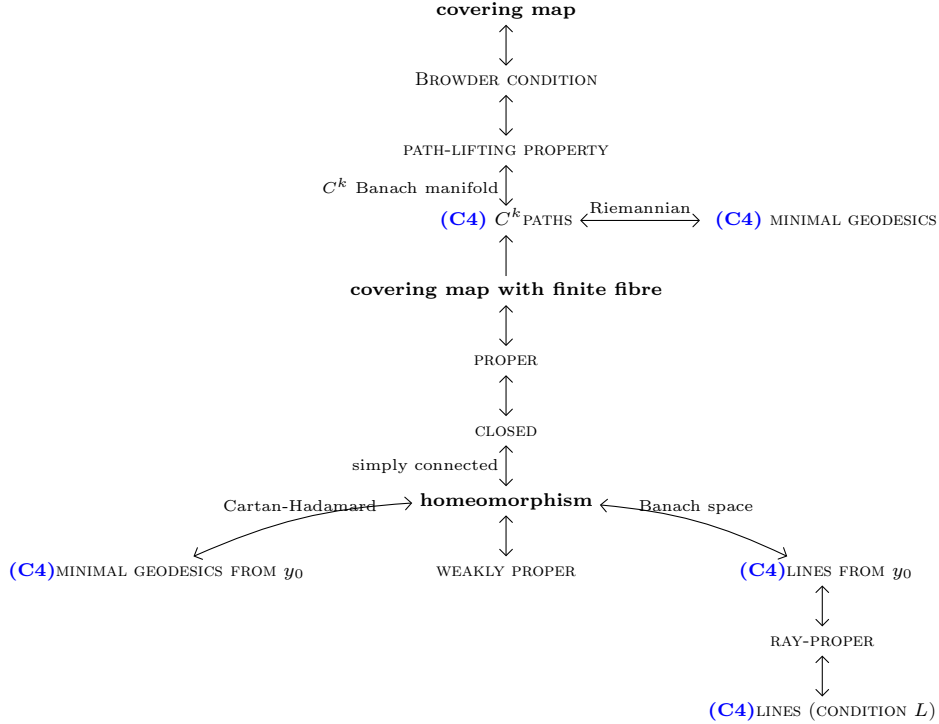
\begin{figure}[htbp]
\begin{center}
{\scriptsize
\begin{tikzpicture}[>=angle 90]
\matrix(m)[matrix of math nodes,
row sep=2em, column sep=1.5em,
text height=1.5ex, text depth=0.25ex]
{ &\mbox{\bfseries covering map} &\\
  &\mbox{\sc Browder condition} &\\
  &\mbox{\sc path-lifting property} &\\
  &\mbox{\ref{rheinboldtc}} ~C^k \mbox{\sc paths} & \mbox{ \ref{rheinboldtc} \sc minimal geodesics}  \\
  &\mbox{\bf covering map with finite fibre}&\\
  &\mbox{\sc proper} &\\
  &\mbox{\sc closed} &\\
  &\mbox{\bf\bfseries homeomorphism}&\\
  \mbox{\ref{rheinboldtc}\sc minimal geodesics from }y_0 & \mbox{\sc weakly proper} & \mbox{ \ref{rheinboldtc}\sc lines from } y_0\\
  &&\mbox{\sc ray-proper}\\
  &&\mbox{\ref{rheinboldtc}\sc lines (condition $L$)}\\
  };
\path[<->] (m-1-2) edge (m-2-2);
\path[<->] (m-2-2) edge (m-3-2);
\path[<->,font=\tiny]
(m-3-2) edge node[left]{$C^k$ Banach manifold} (m-4-2);
\path[<->,font=\tiny]
(m-4-2) edge node[above]{Riemannian} (m-4-3);
\path[<-,font=\tiny]
(m-4-2) edge (m-5-2);
\path[<->](m-5-2) edge (m-6-2);
\path[<->](m-6-2) edge (m-7-2);
\path[<->,font=\tiny]
(m-7-2) edge node[left]{simply connected} (m-8-2);
\path[<->,font=\tiny]
(m-8-2) edge [bend right=10] node[above]{Cartan-Hadamard} (m-9-1);
\path[<->,font=\tiny]
(m-8-2) edge [bend left=10] node[above]{Banach space} (m-9-3);
\path[<->](m-9-3) edge (m-10-3);
\path[<->](m-10-3) edge (m-11-3);
\path[<->](m-8-2) edge (m-9-2);
\end{tikzpicture}
}
\end{center}
\caption{Equivalences and implications for a local homeomorphisms $f:X\rightarrow Y$ between  Banach manifolds, $Y$ connected and also satisfying the condition in the labeled arcs.}
\label{relationshipcoverings}
\end{figure}

\subsection{Metric conditions}
Let  $X$ and $Y$ be Banach spaces and let $f:X\rightarrow Y$ be a (linear) {\it global isometry} namely, $|f(x)|=|x|$ for all $x\in X$.  The isometry $f$ is a distance preserving map and evidently one-to-one. A pertinent question is: Under which conditions $f$ is onto? For example, if the isometry $f$ is a Fredholm map of index zero we may expect that $f$  be onto, since in this case $f(X)$ is always open in $Y$. In this sense, Rheinboldt \cite{rheinboldt} proved that if  $X$ is complete and $f:X\rightarrow Y$ is a continuous map such that $f(X)$ is open in $Y$ and  $|f(u)-f(x)|\geq \alpha |u-x|$ for all $x,u\in X$ and some $\alpha>0$ then $f$ has the continuation property for smooth paths. So, we can conclude that $f$ is surjective.

Suppose that $X$ and $Y$ are connected Finsler manifolds with Finsler distance $d_X$ and $d_Y$, respectively. The Rheinboldt's result invites us to consider the following condition. We shall say that $f$ is an {\it expansive map} if:
\begin{enumerate}[label={\bf (C8)}]
\item \label{expansive}$X$ is complete and $d_Y(f(u),f(x))\geq \alpha~ d_X(u,x)$ for all points $x,u\in X$  and for some $\alpha>0$. 
\end{enumerate}
Let  $f:X\rightarrow Y$  be an expansive Fredholm map of index zero. We can repeat the Rheinboldt technique: Clearly, $f$ is injective and therefore a local homeomorphism, since it is a Fredholm map of index zero. We will check that $f$ has the continuation property for $C^1$ paths. Let $p$ be a $C^1$ path in $Y$, let $q$ be a local lifting of $p$ defined on $[0,\varepsilon)\subset I$, and let $\{t_n\}$ be a sequence converging to $\varepsilon$. The sequence  $\{q(t_n)\}$  is a Cauchy sequence because $\{p(t_n)\}$ converges and $d_Y(p(t_i),p(t_j))\geq \alpha~ d_X(q(t_i),q(t_j))$ for all $t_i,t_j\in \{t_n\}$. The completeness of $X$ implies that $\{q(t_n)\}$ converges in $X$. Therefore $f$ is a covering map with a singleton fibre. So, $f$ is a global homeomorphism. In other words: {\it An expansive Fredholm map of index zero is a global homeomorphism}.

An illustrative example is presented below in order to introduce the relationship between the expansive maps and the surjectivity and injectivity indicator. 

\begin{example}
Let $X$ be a Hilbert space. A {\it strongly monotone operator} $f:X\rightarrow X$ is a map such that: 
\begin{enumerate}[label={\bf (C9)}]
\item \label{monotone}$|\langle f(u)-f(x),u-x\rangle|\geq\alpha|u-x|^2 \mbox{ for all } x, u\in X  \mbox{ and for some }\alpha>0.$
\end{enumerate}
Zarantonello \cite{zarantonello} proved that a strongly monotone operator is a bijective homeomorphism. This is a typical example of an expansive map. Let $x\in X$ and let $h$ be a small enough positive number such that $f(x+h)=f(x)+df(x)(h)+r_x(h)$.  If $h=\epsilon v$ then for some $v\in X$, $|\langle df(x)v,v\rangle|+o(1)\geq \alpha|v|^2$. Hence, $|\langle df(x)v,v\rangle|\geq \alpha|v|^2 \mbox{ and }|\langle v,df(x)^*v\rangle|\geq \alpha|v|^2$ for all $x,v\in X$. Therefore  $\inj df(x)>\alpha$ and $\sur df(x)>\alpha$ for all $x\in X$. 
\end{example}

Condition \ref{expansive} is quite strong, but the Rheinboldt technique  invites us to consider a ``local expansive property'' and the above example in turn leads us to think about the surjectivity and injectivity indicators. In fact, if $X$ and $Y$ are complete and connected Finsler manifolds and $f:X\rightarrow Y$ is a local diffeomorphism then there is the following relationship between the Finslerian distance and the surjectivity and injectivity indicators:
$$D_x^-f:=\liminf_{u\rightarrow x}\frac{d_Y(f(u),f(x))}{d_X(u,x)}=\|df(x)^{-1}\|^{-1}=\sur df(x)=\inj df(x).$$
The first  equality was noted by John \cite{john} for Banach spaces. For Riemannian and Finsler manifolds the proof can be consulted in  \cite{oliviajesuscoverings} and \cite{jesusyoscar}, respectively. So,  for local diffeomorphisms {\it condition} \ref{expansive} {\it implies that both indicators are uniformly bounded below on $X$}. That is, the {\it Hadamard-Levy condition} is fulfilled:

\begin{enumerate}[label={\bf (C10)}]
\item \label{hadamard-levy} $X$ is complete and there is $\beta>0$ such that $\|df(x)^{-1}\|\leq\beta\mbox{ for all } x\in X.$
\end{enumerate}

\subsubsection{The Earle-Eells  condition}
Our goal now is to introduce a fairly general condition in order to get a covering map in terms of the injectivity and surjectivity indicators. In the rest of the section, the symbol  $\mu(x)$  will be used to denote any of the two indicators of $df(x)$. For example, $\mu(x)>0$ means $\inj df(x)>0\mbox{ or }\sur df(x)>0.$ If this is the case and $f$ is a Fredholm operator of index zero then actually both indicators of $df(x)$ are positive (if one indicator is positive, so is the other) and $f$ is indeed a local diffeomorphism and $\mu(x)=\|df(x)^{-1}\|^{-1}$. It is important to note that we don't need the inverse of $df(x)$ to calculate $\mu(x)$ and that can be done via any of the two indicators.

Recall the following direct consequence of the chain rule: if $f$ is a local diffeomorphism and $q$ is a $C^1$ local lifting of a rectifiable path $p$ in $Y$ then: \begin{equation}\label{meanvalue}\ell(q)\cdot \inf\{ \mu(x): x\in \mbox{\rm  image of } q\}  <\ell(p).\end{equation}
So, the sketch of the proof presented in Section \ref{topologicalcharacterization} suggests that the following condition be considered ---called in this paper the {\it Earle-Eells condition}--- in order to get a covering map:
\begin{enumerate}[label={\bf (C11)}]
\item \label{earleell} $X$ is complete and for every $y\in Y$ there exists $\alpha>0$ and a neighborhood $V$ of $y$ such that $\mu(x)\geq\alpha \mbox{ for all } x\in f^{-1}(V).$
\end{enumerate}
Let $f:X\rightarrow Y$ be a Fredholm map of index zero that satisfies condition \ref{earleell}. Because $\mu(x)>0$ for all $x\in X$, as we pointed out before, then the map $f$ is a local diffeomorphism. The condition \ref{earleell} implies that the length of any local lifting of a line segment relative to a chart is rectifiable. So, the local lifting can be extended to whole interval $I=[0,1]$; which leads to the conclusion that $f$ is  a {\it smooth covering map} i.e. $f$ is onto and for each point $y\in Y$ there exists a neighborhood $V$ of $y$ such that $f^{-1}(V)$ is the union of a disjoint family of open sets of $X$, each of which is mapped diffeomorphically onto $V$ by $f$. Therefore,  {\it if $X$ and $Y$ are connected Finsler manifolds and $f:X\rightarrow Y$ is a Fredholm map of index zero then $f$  is a smooth covering map provided $f$ satisfies the Earle-Eells condition}. This result is a special case of  Proposition C of \cite{eells}. Although Earle and Eells request the surjectivity of $f$, this condition can be replaced by connectedness of $Y$. They also request extra and unnecessary smoothness requirements. A simpler proof is presented in the appendix with recently presented ideas (Theorem \ref{firstcharacterization}). Furthermore, it is proven that every smooth covering map with finite fibre satisfies the Earle-Eells condition; see Lemma \ref{lemmafinitefibre} in the appendix. Specially we have: {\it Let $X$ and $Y$ be connected Finsler manifolds. Assume $X$ is complete, $Y$ is simply connected, and $f:X\rightarrow Y$ is a Fredholm map of index zero.  Then the Earle-Eells condition is necessary and sufficient for $f$ to be a global diffeomorphism.}

\begin{remark}{\it Length-path conditions.}
In his seminal paper \cite{hadamard}, Hadamard suggests that if his integral condition is satisfied then ``a path  of infinite length drawn in $X$ can't have an image in $Y$ with finite length'' and, in fact, this consequence should be sufficient for the existence and uniqueness of the nonlinear system $f(x)=y$ whenever $f$ is a local homeomorphism. Supported by this, in 1920 Levy \cite{levy} proved the following for $X=Y$  the space of square integrable functions on $[0,1]$ and $f$  a local homeomorphism of $X$ into $Y$: {\it Suppose that if an open curve $q$ in $X$ is mapped homeomorphically by $f$ on an open segment $p$, must be of finite length; then $f$ is a homeomorphism of $X$ onto $Y$}.  As a consequence he gives an extension of the Hadamard global inversion theorem in this setting. 

In our context, we have the following. Let $f:X\rightarrow Y$ be a local diffeomorphism between Finsler manifolds. Assume $Y$ is connected and $X$ is complete. Because the covering maps have the unique path lifting property, we can conclude that  $f$ is a smooth covering map if and only if:
\begin{enumerate}[label={\bf (C12)}]
\item \label{rectifiablelifting} Every local lifting of a $C^1$ path has finite length.
\end{enumerate}
The Proposition 3.28 of \cite{bridson} is a related result for certain length spaces that are locally uniquely geodesic (which includes the Riemannian manifolds, but not all Banach spaces) such that $\ell(q)\leq\ell(f\circ q)$ for all paths $q$ in $X$. In our context, we can extend this result for Finsler manifolds by means of the condition:
\begin{enumerate}[label={\bf (C13)}]
\item \label{bridson} $X$ is complete and the length of every  $C^1$ path in $X$ is not bigger than the length of its image under $f$.
\end{enumerate}
Clearly condition \ref{bridson} implies condition \ref{rectifiablelifting}. Indeed, if $p$ is a $C^1$ path starting at $f(X)$ and $q$ is a local lifting of $p$ by $f$ then $\ell(q)\leq\ell(p)<\infty$.
\end{remark}

\subsubsection{Coercivity and the Hadamard integral condition}
A map $f:\mathbb{R}^n\rightarrow \mathbb{R}^n$ is proper if and only if it is {\it norm-coercive}; see Theorem 3.3 of \cite{nonlinearspectraltheory}, namely:
$$|f(x)|\rightarrow\infty \mbox{ as } |x|\rightarrow \infty.$$
That is, for any $\varrho\geq 0$ there is $\rho\geq 0$ such that $|f(x)|>\varrho$ if $|x|>\rho$. It is easy to see that $f$ is norm-coercive if and only if the pre-image $f^{-1}(B)$ of any bounded subset $B$ of $Y$ is bounded by $X$. In particular {\it $f:\mathbb{R}^n\rightarrow\mathbb{R}^n$ is a norm-coercive local homeomorphism  if and only if $f$ is a global homeomorphism}. This characterization supports the false but intuitive idea that  a continuous bijection between vector normed spaces ``must'' be coercive. Nevertheless, for a infinite-dimensional Banach space $X$, a homeomorphism $f:X\rightarrow X$ can be constructed such that $f$ maps $X\setminus B$ into $B$ where $B$ is a ball in $X$ \cite{tseng}. So this homeomorphism is not norm-coercive. To complete the picture, the reader is referred to Example 3.12 of  \cite{nonlinearspectraltheory} for an infinite-dimensional  example of a norm-coercive but non-proper map.

For special cases there are some results along this line, for example every   locally injective norm-coercive compact perturbation of the identity is a global homeomorphism \cite{rheinboldt}.  In general, {\it a local diffeomorphism $f:X\rightarrow Y$ between Banach spaces is a global one provided it is  norm-coercive  and $\|df(x)^{-1}\|\leq g(|x|)$ for some continuous positive function $g$ on $\mathbb{R}$.} Note that such a function $g$ exists if and only if $\sup_{|x|\leq \rho}\|df(x)^{-1}\|<\infty$ for all $\rho>0$.  The last statement in italics was proposed by Plastock and proven via condition $L$ \cite{plastock}.  See \cite{zampieri} for an alternative proof.   Another generalization for metric spaces but in terms of  $D_x^-f$ ---which includes  Finsler manifolds--- can be found in \cite{oliviajesuscoverings}. 

 From now and throughout this subsection  we are going to suppose that $X$ and $Y$ are both connected Finsler manifolds endowed  with the Finsler metrics $d_X$ and $d_Y$, respectively. So, a more than justified version in our setting can be established via the injectivity or surjectivity indicator by means of the following condition for some $x_0\in X$:
\begin{enumerate}[label={\bf (P)}]
\item \label{infinitedim} $X$ is complete and for any $\rho>0$ there exists $\alpha_\rho>0$   such that $\mu(x) >\alpha_\rho$ if $d_X(x_0,x)\leq \rho$.
\end{enumerate}
Let $f:X\rightarrow Y$ be a Fredholm map of index zero. Clearly \ref{infinitedim} implies that $f$ a local diffeomorphism. Furthermore, because the mapping $x\mapsto \mu(x)$ is continuous, if $X$ is finite-dimensional then  condition \ref{infinitedim} is equivalent to $f$ being a local diffeomorphism.  Recall, a function $f$ between connected Finsler manifolds is {\it coercive} if $f^{-1}(B)$ is bounded provided $B$ is bounded, that is, for some $y_0\in Y$ and $x_0\in X$, $d_Y(f(x),y_0)\rightarrow\infty$ as $d_X(x,x_0)\rightarrow\infty$. We shall say that a map $f:X\rightarrow Y$ satisfies the {\it Plastock condition} if:
\begin{enumerate}[label={\bf (C14)}]
\item \label{katriel} $f$ is coercive and satisfies \ref{infinitedim} for some $x_0\in X$.
\end{enumerate}
{\it If $f$ satisfies the Plastock condition then it is a smooth covering map}. This can be seen as a direct consequence of the Earle-Eells condition since, for every $y\in Y$ we can consider a (small enough) bounded set $V$ containing $y$ and dominium of a chart centered at the origin.

Note that \ref{infinitedim} holds if and only if  $\inf_{d_X(x_0,x)\leq \rho}\mu(x)>0$ for all $\rho>0$.  Now, let 
$$\varrho(r)=\int_{0}^r \inf_{d_X(x_0,x)\leq \rho}\mu(x) d\rho.$$
The function $\rho\mapsto \inf_{d_X(x_0,x)\leq \rho}\mu(x)$ is nonincreasing, therefore a sufficient (obviously not necessary) condition for \ref{infinitedim} is: 
\begin{enumerate}[label={\bf (C15)}]
\item \label{hadamardintegral} $X$ is complete and $\lim_{r\rightarrow\infty}\varrho(r)=\infty$.
\end{enumerate}
And this limit is nothing but the {\it  infinite-dimensional version of the Hadamard integral condition}. As may be expected and indeed pointed out below, for the index zero case condition \ref{hadamardintegral} implies that $f$ is a smooth covering map and $Y$ is complete. This is  a consequence of the following remarkable fact. Let $r>0$ be fixed and $\varrho=\varrho(r)$. If $f$ is a local diffeomorphism, since every rectifiable path $p$ in $B_\varrho(f(x_0))$ with $p(0)=f(x_0)$ and $\ell(p)<\varrho$ can be lifted to a path in $B_r(x_0)$ then: \begin{equation}\label{janjohansson}\varrho>0\mbox{ implies }B_\varrho(f(x_0))\subset f(B_r(x_0)).\end{equation} 
This has been noted by John \cite{john} for Banach spaces and generalized  for length spaces in terms of $D_x^-f$ in \cite{oliviajesusisabel}.  A direct proof for Finsler manifolds can be done in terms of $\mu(x)$ using the same arguments; see proof of Lemma \ref{hadamardimpliesplastock} in the appendix. Let $x_0$ be a solution solution of $f(u)=y_0$ and suppose that $\varrho>0$ for some $r>0$. Note that \eqref{janjohansson} implies that if $d_Y(y,y_0)<\varrho$ then there is a solution $x$ of $f(u)=y$ such that $d_X(x,x_0)<r$.

Even more, if the mapping $f:X\rightarrow Y$ satisfies \ref{hadamardintegral} and $Y$ is simply connected then $f$ is a global coercive diffeomorphism, hence $f$ satisfies the condition \ref{katriel}. Indeed, let $B$ be a bounded set in $Y$. There is $R>0$ such that $B\subset B_{R}(f(x_0))$. Since $\lim_{r\rightarrow\infty}\varrho(r)=\infty$  there is $s>0$ such that $R=\varrho(s)$. So, $B\subset B_R(f(x_0))\subset f(B_s(x_0))$ and therefore $f^{-1}(B)\subset B_s(x_0)$. Summing up, if $f:X\rightarrow Y$ is a Fredholm map of index zero between  connected Finsler manifolds satisfying {\rm \ref{hadamardintegral}} then:
\begin{enumerate}
\item[\buletita] $f$ is a smooth covering map.
\item[\buletita] $Y$ is  complete.
\item[\buletita] If $Y$ is simply connected then $f$ satisfies the Plastock condition, in particular $f$ is a coercive global diffeomorphism. 
\end{enumerate}

\begin{remark}{\it Ray-coercive maps.} 
 A map $f:X\rightarrow Y$ between Banach spaces is said to be {\it ray-coercive} at $y_0$ if the pre-image of  the line joining $y_0$ and $z$ is bounded for any $z\in Y$ and some $y_0\in Y$.  A coercive map is ray-coercive, but not vice versa; see Example 3.11 of \cite{nonlinearspectraltheory}. Furthermore, every ray-proper function is ray-coercive and the converse is true if $X$ is finite-dimensional, see e.g. Example 3.14 of \cite{nonlinearspectraltheory}. It is easy  to conclude that a Fredholm map of index zero $f$ is a global diffeomorphism provided:
\begin{enumerate}[label={\bf (C16)}]
\item \label{weakkatriel}  $f$ is ray-coercive at some $y_0\in f(X)$ and satisfies \ref{infinitedim}.
\end{enumerate}
Suppose that $f$ is a Fredholm map of index $0$  such that \ref{infinitedim} holds for $x_0=0$, that is, for any $\rho>0$ there exists $\alpha_\rho>0$   such that $\mu(x) >\alpha_\rho \mbox{ if } |x|\leq \rho$. Thus $f$ is a local diffeomorphism and if $y_0\in f(X)$ then  {\it $f$ is ray-coercive at $y_0$ if and only if $f$ is ray-proper at $y_0$}. Actually, if $f$ is ray-coercive at $y_0\in f(X)$ and satisfies \ref{infinitedim} then any local lifting $q$ of a ray starting at $y_0$  is contained in a ball $B_\rho(0)$ for some $\rho>0$ hence $q$ has finite length. So $f$ is a global diffeomorphism. Then $f$ is a proper map, thus it is a ray-proper map at $y_0$.
\end{remark}

\begin{remark}
An interesting fact is that for a linear map $T:X\rightarrow Y$ the properness and norm-coercivity of $T$ are each equivalent to the existence and boundedness of $T^{-1}$ on $\Range T$ \cite[p. 67, p. 73]{nonlinearspectraltheory}. As a consequence, for linear Fredholm maps of index $0$ (since $dT(x)=T$ for all $x\in X$) the following characterization of a linear isomorphism can be concluded.  Let $T:X\rightarrow Y$ be a {\it linear} Fredholm map of index $0$, then the following statements are equivalent:
\begin{enumerate}
\item[\buletita] $T$ is a proper map.
\item[\buletita] $T$ is norm-coercive.
\item[\buletita] $T$ satisfies the Hadamard integral condition.
\item[\buletita] $T$ is a linear isomorphism.
\end{enumerate}
\end{remark}

As already said in the introduction,  Katriel \cite{katriel} established an alternative technique to the monodromy argument for global homeomorphism theorems for maps between metric spaces. The principal idea is to show that mountain-pass theorems can be used to prove new global inversion results as well as new proofs of known theorems. Theorem 6.1 of \cite{katriel} asserts that:  A local homeomorphism  $f:X\rightarrow Y$ is a  global homeomorphism provided that for all $\varrho>0$ and for some $y_0\in Y$:  $$\inf\{\mbox{\rm sur}(f,x): d(f(x),y_0)<\varrho\}>0,$$  where $X$ and $Y$ are complete path-connected metric spaces such that $X$ remains path-connected after the removal of any discrete set and $Y$ is a ``nice'' space, that is, for each $y\in Y$ there is a continuous functional $g_y:Y\rightarrow\mathbb{R}$  satisfying a PS-condition (non-smooth version) and possessing a unique minimizer and a discrete set of maximizers as the only critical points. Here,  $\mbox{sur}(f,x)$ is the  {\it surjection constant} of $f$ at $x$, originally introduced by Ioffe \cite{ioffe} in order to get a generalization of Plastock's results  (also via condition $L$) for non differentiable maps between Banach spaces, namely:
$$\mbox{\rm sur}(f,x)=\liminf_{r\rightarrow 0}\frac{1}{r}\sup\{R\geq 0:B_R(f(x))\subset f(B_r(x))\}.$$
Fortunately,  for a local diffeomorphism $f:X\rightarrow Y$ between  connected and complete Finsler manifolds we have: $$D_x^-f=\mbox{sur}(f,x)=\mu(x).$$ See Remark 3.4 and Example 3.2 of  \cite{oliviajesuscoverings} and Proposition 3.11 of  \cite{jesusyoscar}. Therefore, in our setting, the {\it Katriel condition} can be established in terms of the injectivity or surjectivity indicator and  the Finsler distance, that is:

\begin{enumerate}[label={\bf (C17)}]
\item \label{katrielmain} $X$ is complete and $\inf\{\mu(x): d_Y(f(x),y_0)<\varrho\}>0$ for all $\varrho>0$ for some (then for all) $y_0\in Y$.
\end{enumerate}

Note that if $f$ is a Fredholm map of index zero then condition \ref{katrielmain} implies that $f$ is a local diffeomorphism. Furtheremore, we can  prove that  condition \ref{katrielmain} implies the continuation property for all $C^1$ paths: Let $p$ be a $C^1$ path in $Y$ beginning at $y_0$ and let $q$ be a local lifting of $p$. Since the image of $p$ is compact, there is $\varrho>0$ such that $d_Y(p(t),y_0)<\varrho$ for all $t\in I$. In particular, $d_Y(f(x),y_0)<\varrho$ for all $x$ in the image of $q$. Therefore $\ell(q)<\infty$. In other words, {\it if $X$ and $Y$ are connected Finsler manifolds and $f:X\rightarrow Y$ is a Fredholm map of index zero then condition {\rm \ref{katrielmain}} implies that $f$ is a smooth covering map.} 

In comparison with the Katriel approach, it is worth mentioning that it is not clear which Finsler manifolds $Y$ are nice spaces. Actually, Katriel gives only two examples: Banach spaces with $g_y(z)=|z-y|$ and infinite-dimensional Hilbert manifolds with $g_y(z)=1-e^{\|\pi(z)-y\|_y}$ for $z\neq -y$ and $g_y(-y)=1$, where the map $\pi:Y\setminus\{-y\}\rightarrow T_yY$ is the stereographic projection. Cartan-Hadamard Finsler manifolds can be added to the list.  Nevertheless, all these examples are simply connected spaces, so at the moment an example hasn't been presented to test the effectiveness of the monodromy argument. We shall return to this point in the next subsection.

As Katriel notes: {\it the Plastock condition implies the Katriel condition}. Indeed, since $f$ is coercive, for every  $\varrho>0$ there is $\rho>0$ such that $d_Y(f(x),y)\geq\varrho$ if $d_X(x,x_0)\geq \rho$ for some $x_0$. Therefore, $$\inf\{\mu(x): d_Y(f(x),y_0)<\varrho\}\geq\inf\{\mu(x): d_X(x,x_0)<\rho\}\geq\alpha_\rho>0.$$
See Theorem 6.2 of \cite{katriel}. In short, if $f:X\rightarrow Y$ is a local diffeomorphism between connected and complete Finsler manifolds and $Y$ is simply connected then each of the following statement implies the next:
\begin{enumerate}
\item[\buletita] $f$ is an expansive map.
\item[\buletita] $f$ satisfies the Hadamard-Levy condition.
\item[\buletita] $f$ satisfies the Hadamard integral condition.
\item[\buletita] $f$ satisfies the Plastock condition, in particular $f$ is a coercive map.
\item[\buletita] $f$ satisfies the Katriel condition.
\item[\buletita] $f$ satisfies the Earle-Eells condition.
\end{enumerate}

\subsubsection{The special case of uniform lower bound}
The classical theorem in Riemannian geometry
proved by  Ambrose \cite{ambrose}  asserts the following.  Let  $(X,h)$ and $(Y,g)$ be  Riemannian manifolds of dimension $n$. If $f:X\rightarrow Y$ is a  surjective $C^{\infty}$  map such that:
\begin{enumerate}[label={\bf (C18)}]
\item \label{localisometry} $X$ is complete and $f$ is a {\it Riemannian local isometry} i.e. for every $x\in X$, $v,w\in T_xX$ and $y=f(x)$ it holds that $g_{y}\left(df(x)v,df(x)w\right)=h_x(v,w),$
\end{enumerate}
then $f$ is a smooth covering map, hence $f$ is a {\it Riemannian covering}.  If $Y$ is simply connected then $f$ is indeed a {\it Riemannian isometry}. A simple calculation shows that condition \ref{localisometry} implies that $\inj df(x)=1$ for all $x\in X$.  Condition \ref{localisometry}  makes  sense also for infinite-dimensional Riemannian manifolds. Note that for an infinite-dimensional version, we can substitute the hypothesis $\dim X=\dim Y=n$ by requesting that $f$ be a Fredholm map of index zero (the surjectivity condition on $f$ may be changed by the connectedness of $Y$). Finally, by the chain rule, if $f$ is a Riemannian local isometry then it satisfies  \ref{bridson}.

Now, suppose that $f:(X,h)\rightarrow (Y,g)$ is a local diffeomorphism between possibly infinite-dimensional Riemannian manifolds. Assume $Y$ is connected and $X$ is complete. A typical step in the proof of the Cartan-Hadamard theorem ---e.g. Theorem 6.9 of \cite{lang}--- uses the fact that, if a constant $\alpha>0$ exists such that for all $x\in X$ and $v\in T_xX$ such that $\|df(x)v\|_{g}\geq\alpha\|v\|_h$
then $f$ is a covering map. In the above inequalities: $$\|v\|_{h}^2=h_x(v,v)\mbox{ and }\|df(x)v\|_{g}^2=g_y(df(x)v,df(x)v).$$ 
In our notation, this  means that  {\it if   $\inj df(x)>\alpha$ for all $x\in X$ then $f$ is a covering map}. The Cartan-Hadamard Theorem follows for the particular case $$f=\exp_p:T_pM\rightarrow M$$ where $M$ is a geodesically complete manifold of semi-negative curvature since, in this case $\alpha=1$. The usual proof of the last statement in italics consists of reducing the demonstration to the case where $f$ is a local isometry and then Ambrose Theorem is used. Relatively recently, a similar result with an analogous approach was given by Neeb  \cite{neeb} for Finsler manifolds $(X,\|\cdot\|_X)$ and $(Y,\|\cdot\|_Y)$ ($Y$ is a manifold with spray)  through the inequality:
$$\|Tfv\|_{Y}\geq\alpha\|v\|_X \mbox{ for all } v\in TX$$ where $Tf:TX\rightarrow TY$ is the tangent map defined to be $df(x)$ on each fibre of $T_xX$. Of course, Neeb's condition implies that $\inj df(x)>\alpha$ for all $x\in X$. Therefore  Neeb's result can be deduced from the above arguments using the Hadamard-Levy condition in terms of the injectivity indicator. In this case:
\begin{itemize} 
\item[\buletita]  If  $X$ is  connected then the Hadamard integral condition holds trivially,  regardless of $x_0$. In particular, $Y$ is complete and:  $$B_{\alpha r}(f(x))\subset f(B_r(x)) \mbox{ for all }x\in X \mbox{ and }r>0.$$
\item[\buletita] If $Y$ is simply connected then  $f$ is a coercive global diffeomorphism.
\end{itemize}

\subsection{Palais-Smale conditions}
In the middle of the sixties, Palais and Smale \cite{palaismale} introduced the {\it condition} (C) for functionals $F:X\rightarrow \mathbb{R}$ where $X$ is a Riemannian manifold,  possibly infinite-dimensional. Suppose for convenience that $X$ is a Hilbert space. A functional $F$ satisfies the condition (C) if the closure of any nonempty subset $S$ of $X$ on which $f$ is bounded but on which $|\nabla F|$ is not bounded away from zero, contains a critical point of $F$. As usual  $\nabla F(x)$ is the gradient of $F$ at $x$ defined in terms of the Fr\'echet differential by 
$dF(x)w=\langle\nabla F(x),w\rangle$.  The origin  of the condition (C) is the study of the asymptotic properties of the {\it gradient flow} of a $C^2$ functional, namely the solutions $x(t)$ of the Cauchy problem \cite{mawhin2}:
$$\dot x=-\nabla F(x), \hspace{1cm}x(0)=x_0,$$
The function $t\mapsto F(x(t))$ is nonincreasing. If $x(t)$ is defined for all positive $t$ and $\lim_{t\rightarrow+\infty} F(x(t))$ is finite then there is a sequence $\{t_n\}$ such that $\nabla F(x(t_n))\rightarrow 0$. As is  also point out in \cite{mawhin2}: ``the question is then to find conditions upon $F$ under which this sequence of {\it almost critical points} of $F$ provides a real one.'' 

The first work that the author found in the literature that established a relationship between condition (C) and global inversion theorems corresponds to Gordon \cite{gordon}. The paper consists basically of an alternative proof of the fact that a local diffeomorphism $f:\mathbb{R}^n\rightarrow\mathbb{R}^n$ is a global one if and only if it is a proper map.  The idea to show that $f$ is onto provided it is proper is the following. Let  $$F_y(x)=\frac{1}{2}|f(x)-y|^2.$$ Since $F_y$  is also proper, it  satisfies the condition (C) and the gradient flow exists for all positive time. So a critical point $x^*$ of $F_y$ exists as the limit of a sequence $\{x(t_n)\}$ in the gradient flow of $F_y$ such that $f(x^*)=y$. Gordon also used the gradient flow of $F_y$ to show that $f$ is one-to-one. An alternative and very easy proof of the injectivity of $f$ can be found in the introduction to \cite{katriel} by means of the simplest mountain-pass theorem in $\mathbb{R}^n$ applying to $F_y$.

Note that in the generalizations of the mountain-pass theorems in non locally compact settings it is usual to replace the  norm-coercivity assumption  by a generalization or variant of the Palais-Smale type condition. So, a suitable Palais-Smale type condition for $F_y$ must  guarantee the bijection of $f$. This reflection was noted by Rabier \cite{rabier2} independently of Gordon and Katriel a year before the latter.

Recall, a $C^1$  functional $F$ satisfies the PS-{\it condition} if any sequence $\{x_n\}$ in $X$ such that $F(x_n)$ is bounded and $\|\nabla F(x_n)\|\rightarrow$ 0 ---called PS-{\it sequence}--- contains a convergent subsequence, whose limit is then a critical point of $F$. This is a stronger condition of the condition (C); see Section 3 of  \cite{mawhin2}, but has been widely used in the context of Banach spaces, as well as in the following localized form: a functional $F$ satisfies the $\mbox{PS}_c$-{\it condition} if any sequence $\{x_n\}$ in $X$ such that $F(x_n)\rightarrow c$ and $\|\nabla F(x_n)\|\rightarrow$ 0 ---called $\mbox{PS}_c$-{\it sequence}--- contains a convergent subsequence. 

Recently, Idczak {\it et al.} \cite{idzak1} adapted the ideas of Katriel \cite{katriel}  to get a global inversion theorem for a $C^1$ map $f:X\rightarrow Y$  between Hilbert spaces by means of the functional: $$F_y(x)=\frac{1}{2}|f(x)-y|^2.$$ 
 as ``an alternative'' to the Plastock condition. Specifically, they proved that a local diffeomorphism $f$ is a global one, provided:
\begin{enumerate}[label={\bf (C19)}]
\item \label{palaismale}  $F_y$  satisfies the  $\mbox{PS}$-condition for all $y\in Y$.
\end{enumerate} 
See also \cite{galewski} for some recent related results along this line. The connection between Idczak's result and the above conditions can be easily made by means of the following fact:  {\it If $f:X\rightarrow Y$ is a Fredholm map of index $0$ between Hilbert spaces then  the Katriel condition implies  that $F_y$  satisfies the  $\mbox{PS}$-condition for all $y\in Y$}; see Lemma \ref{lemaconectionkatrielps} in the appendix. On other hand, the functional $F_0=\frac{1}{2}|f|^2$  is bounded below. By the Ekeland Variational Principle,  if $F_0$ satisfies  the $\mbox{PS}$-condition then it is a coercive map. Therefore, so is $f$.  In summary, if  $f:X\rightarrow Y$ is a Fredholm map of index $0$ between Hilbert spaces and satisfies {\rm\ref{infinitedim}} then the following statements are equivalent:
\begin{enumerate}
\item[\buletita] $F_y$  satisfies the  $\mbox{PS}$-condition for all $y\in Y$.
\item[\buletita] $f$ is a coercive global diffeomorphism.
\item[\buletita] $f$ satisfies the Plastock condition.
\item[\buletita] $f$ satisfies the Katriel condition.
\end{enumerate}
The author believes that condition \ref{palaismale} and Lemma \ref{lemaconectionkatrielps} may be carried out to the Banach spaces setting using the Clark subgradient for the functions $F_y$ with a suitable definition of the Palais-Smale sequence. Perhaps it can also be extended to the Cartan-Hadamard Finsler manifolds. The appropiate adequation  for Finsler manifolds is not clear in terms of the Finsler distance, since the critical points of the distance function are  involved.

The proof of Lemma \ref{lemaconectionkatrielps} basically contains two ideas: the Katriel condition implies that there are no $\mbox{PS}_c$-sequences for $F_y$ with $c\neq 0$ and  every $\mbox{PS}_0$-sequence for $F_y$ converges trivially to the minimum of $F_y$. Along this line, Rabier gives a characterization of the global diffeomorphism between $C^1$ Finsler manifolds in terms of sort of ``generalized $\mbox{PS}$-sequences''; see Theorem 5.3 of \cite{rabier}. He considers for a map $f:X\rightarrow Y$ between Finsler manifolds the following condition:
\begin{enumerate}[label={\bf (C20)}]
\item \label{strongsubmersion} $X$ is complete and $f$ is a {\it strong submersion}, that is: there is no sequence $\{x_n\}$ from $X$ with $f(x_n)\rightarrow y\in Y$ and $\sur f(x_n)\rightarrow 0$.
\end{enumerate} 
Rabier establishes that {\it if $f$ is a local diffeomorphism, $Y$ is simply connected, and $X$ is complete then $f$ is a strong submersion if and only if it is a global diffeomorphism}, arguing without more details, that it is enough an analogous version for Finsler manifolds of the Plastock's global inversion theorem via condition $L$ \cite[Rmk. 4.2]{rabier}. In the proof of Theorem \ref{firstcharacterization} and Lemma \ref{lemmafinitefibre} sufficient arguments have already been given to justify this statement, since if $X$ is complete,  the definition of a  strong submersion is a simply rephrasing of the condition that $f$ satisfies the Earle-Eells condition. In other words, condition \ref{earleell} is equivalent to condition \ref{strongsubmersion}.

If $X$ and $Y$ are Hilbert spaces and $f$ is a local diffeomorphism then condition {\rm\ref{palaismale}} implies that $f$ is a  strong submersion, hence $f$ lifts lines. It is important to note that a direct proof  of this fact  hasn't been given (and at the moment the author doesn't know how). Instead, using arguments of critical point theory, it has been showed that  condition \ref{palaismale} implies that $f$ is a global diffeomorphism. But this is not an exception, for example Xavier \& Nollet \cite{xaviernollet} proved that if $f:\mathbb{R}^n\rightarrow\mathbb{R}^n$ is a local homeomorphism such that:
\begin{enumerate}[label={\bf (C21)}]
\item \label{xavier} $f_v(x)=\langle f(x),v\rangle$ satisfies the Palais-Smale conditon for all nonzero $v\in\mathbb{R}^n$.
\end{enumerate} 
 then it is bijective. Again, the monodromy argument doesn't seems to be natural in this case. The proof of Xavier and Nollet is based on arguments involving degree theory and cannot be extended in a general form to the infinite-dimensional setting, only for restricted classes of maps. Note that if for all $v\neq 0$, $$\inf_{x\in X}|df(x)^*v^*|>\alpha_v>0$$ then $f_v$ satisfies trivially the PS-condition since $|\nabla f_v(x)|=|df(x)^*v^*|$.

Also, with this technique Xavier and Nollet proved a significantly simpler version of the Hadamard theorem by means of integral conditions with parameter $v$: $$\int_0^\infty \min_{|x|=\rho}|\nabla f_v(x)| d\rho=\infty, \mbox{ for all }v\neq 0.$$
Some recent extensions of this kind of theorem for finite-dimensional manifolds can be found in \cite{xaviernegativecurvature}. So, a pertinent question is whether we can replace all of the above metric conditions in terms of $\mu(x)$ by a family of metric conditions with parameter $v\neq 0$ in terms of $|df(x)^*v^*|$ in the finite-dimensional case.  

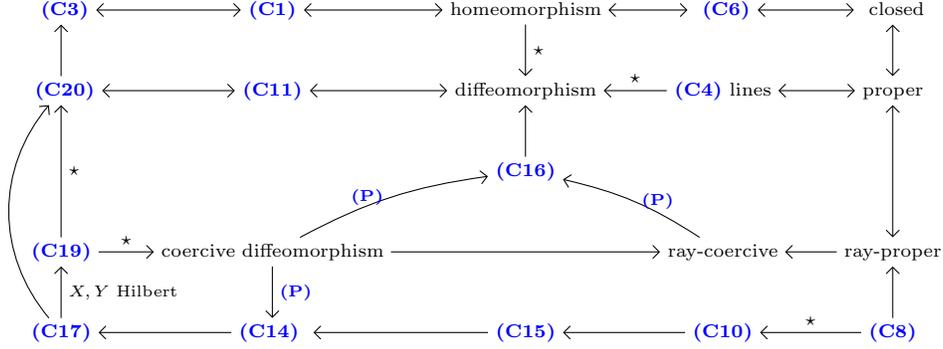
\begin{figure}[htbp]
\begin{center}
{\scriptsize
\begin{tikzpicture}[>=angle 90]
\matrix(b)[matrix of math nodes,
row sep=2.5em, column sep=2.5em,
text height=1.5ex, text depth=0.25ex]
{\mbox{ \ref{browder}}& \mbox{\ref{pathlifing}}&\mbox{homeomorphism} & \mbox{ \ref{weaklyproper}} & \mbox{ closed}\\
\mbox{ \ref{strongsubmersion}}& \mbox{\ref{earleell}}&\mbox{diffeomorphism} & \mbox{\ref{rheinboldtc} lines} & \mbox{proper}\\
& &\mbox{\ref{weakkatriel}} &  & \\
\mbox{\ref{palaismale}}& \mbox{coercive diffeomorphism}&& \mbox{ray-coercive} & \mbox{ray-proper}\\
\mbox{\ref{katrielmain}}& \mbox{\ref{katriel} }& \mbox{\ref{hadamardintegral}} & \mbox{\ref{hadamard-levy}} & \mbox{\ref{expansive}}\\
};

\path[<->]
(b-1-1) edge (b-1-2);
\path[->]
(b-2-1) edge (b-1-1);
\path[<->]
(b-1-2) edge (b-1-3);
\path[<->]
(b-1-3) edge (b-1-4);
\path[<->]
(b-1-4) edge (b-1-5);
\path[<->]
(b-1-5) edge (b-2-5);
\path[<->]
(b-2-1) edge (b-2-2);
\path[<->]
(b-2-2) edge (b-2-3);
\path[<-]
(b-2-3) edge node[above]{$\star$} (b-2-4);
\path[<->]
(b-2-4) edge (b-2-5);
\path[->]
(b-1-3) edge node[right]{$\star$} (b-2-3);
\path[->]
(b-3-3) edge (b-2-3);
\path[->,font=\tiny]
(b-4-2) edge [bend left=10]  node[left]{\mbox{\ref{infinitedim}}} (b-3-3);
\path[->]
(b-4-2) edge (b-4-4);
\path[->,font=\tiny]
(b-4-4) edge [bend right=10]  node[right]{\mbox{\ref{infinitedim}}} (b-3-3);
\path[->]
(b-4-1) edge node[right]{$\star$} (b-2-1);
\path[->]
(b-4-1) edge node[above]{$\star$} (b-4-2);
\path[<->]
(b-4-5) edge (b-2-5);
\path[->]
(b-4-5) edge (b-4-4);
\path[<-]
(b-5-1) edge (b-5-2);
\path[<-]
(b-5-2) edge (b-5-3);
\path[<-]
(b-5-3) edge (b-5-4);
\path[<-]
(b-5-4) edge  node[above]{$\star$} (b-5-5);
\path[->]
(b-5-5) edge (b-4-5);
\path[->,font=\tiny]
(b-5-1) edge  node[right]{\mbox{$X, Y$ Hilbert}} (b-4-1);
\path[->,font=\tiny]
(b-5-1) edge [bend left=40]  (b-2-1);
\path[<-,font=\tiny]
(b-5-2) edge  node[right]{\ref{infinitedim}} (b-4-2);
\end{tikzpicture}
}
\end{center}
\caption{Implications and equivalences for  a locally injective Fredholm maps of index $0$. The $\star$ in the arc means that $f$ must have no critical points. }
\label{relationshipbanach}
\end{figure}

\subsection{Weighted conditions}
In the late sixties, Meyer extended condition \ref{hadamard-levy} where $\|df(x)^{-1}\|$ is allowed to go to infinity at most linearly in $\|x\|$; see Theorem 1.1 of \cite{meyer}. More precisely, he proved that  a  local diffeomorphism $f:\mathbb{R}^n\rightarrow\mathbb{R}^n$ which has a locally Lipschitz continuous Fr\'echet derivative  such that  $\|df(x)^{-1}\|\leq a \|x\| +b$ for all $x\in\mathbb{R}^n$ for some $a,b>0$  is a global homeomorphism.  Note that the Meyer result can be deduced from the original Hadamard theorem. His criterion can be substituted by $(a \rho +b)^{-1}\leq \inf_{|x|=\rho}\|df(x)^{-1}\|^{-1}$ for all $\rho\geq 0$. Since $\int_0^\infty \frac{1}{a \rho +b}d\rho=\infty$ therefore $f$ satisfies the {\it finite} dimensional version of the Hadamard integral condition ---the infimum on the spheres instead of the balls--- so it is a global diffeomorphism.

For a function $f:X\rightarrow Y$ between Banach spaces, Rheinboldt  gives an extension of Meyer's result for compact perturbations of the identity, see Theorem 3.12 of \cite{rheinboldt}. Furthermore, in the late eighties Ioffe  actually proved a very general statement in terms of the surjection constant of a continuous locally one-to-one function $f$ and a  lower semicontinuous function $\eta:[0,\infty)\rightarrow (0,\infty)$: If $\eta$ is a {\it weight}, namely:  $$\int_{0}^\infty \eta(\rho) d\rho=\infty,$$  and $\mbox{sur}(f,x)\geq \eta(|x|)$ for all $x\in X$ then 
$f$ is a homeomorphism onto $Y$;  see Theorem 2 of \cite{ioffe}. As we have already pointed out, if $f$ is a local diffeomorphism $\mbox{sur}(f,x)=\|df(x)^{-1}\|^{-1}=\mu(x)$. As before, the Ioffe condition can be rewritten as:
$$\eta(\rho)\leq\inf_{\|x\|=\rho}\mu(x), \mbox{ for all }\rho\geq 0.$$
 This makes a connection with the relatively recent work of Li {\it et al.} \cite{hongxu} who rediscovered this fact when $\eta$ is  a continuous or  a nonincreasing weight. A definite example is, of course, when $f$  satisfies  condition \ref{hadamardintegral} since in this case  $\eta(\rho)$ is the nonincresing weight $\rho\mapsto\inf_{\|x\|\leq \rho}\mu(x)$. The Ioffe formulation can be established for mappings between connected Finsler manifolds, replacing $|x|$ by $d_X(x_0,x)$ for some arbitrary $x_0\in X$. We shall say that a map between connected Finsler manifolds satisfies the {\it Ioffe condition} if:
\begin{enumerate}[label={\bf (C22)}]
\item \label{ioffe} $X$ is complete and there is a {\it continuous} weight $\eta:[0,\infty)\rightarrow (0,\infty)$ such that $\mu(x)\geq\eta(d_X(x_0,x))$ for all $x\in X$ and some $x_0\in X$.
\end{enumerate}
Actually, for mappings between connected Finsler manifolds, Rabier \cite[Rmk. 4.4]{rabier} considers the weighted version of a strong submersion. So, we set the condition:
\begin{enumerate}[label={\bf (C23)}]
\item \label{strongsubmersionweighted} $X$ is complete and $f$ is a {\it strong submersion with continuous weight}, that is: there exists a continuous weight $\eta(\rho)=\frac{1}{\omega(\rho)}$ such that there is no sequence $\{x_n\}$ in $X$ with $f(x_n)\rightarrow y\in Y$ and $$\sur f(x_n)\omega(d_X(x_0,x_n))\rightarrow 0$$ for some  $x_0\in X$.
\end{enumerate}
In fact, in his previous work  \cite{rabier3} Rabier considers the weight $\omega(\rho)=1+\rho$ for functions between Banach spaces, motivated by Cerami's generalization of the Palais-Smale condition \cite{cerami}. See also Section 8 in \cite{mawhin}. He asserts that the Gr\"onwall's Lemma is a  way to check that condition \ref{strongsubmersionweighted} is sufficient for a local diffeomorphism to be a covering map; see Remark 4.4 and Theorem 5.3 of \cite{rabier}. 
 
\begin{remark}{\it Change of metric}.
 An illustrative proof of the fact that condition \ref{strongsubmersionweighted} carries over to a covering map is the following.  In order to simplify the exposition, assume that $(X, |\cdot|)$ is a Banach space. Let $\eta(\rho)=\frac{1}{\omega(\rho)}$ be a continuous weight. Consider the following weighted length of a path $\tilde\ell(\alpha)=\int_{0}^1\eta(|\alpha(t)|)|\dot \alpha(t)|dt$ and set:
  $$\tilde d(x,x')=\inf\{\tilde\ell(\alpha):\alpha \mbox{ is a } C^1 \mbox{ path connecting } x \mbox{ with } x'\}.$$
 According to Theorem 4.1 of \cite{corvellec} (see also Section 3 in \cite{plastock}) $\tilde d$ is a metric  such that $(X,\tilde d)$ is complete if and only if $X$ is complete with the distance associated to the given norm. If $f:X\rightarrow Y$ is a local diffeomorphism and $q$ is a local lifting of a rectifiable path $p$ in $Y$  then by the chain rule: 
\begin{equation}\label{meanvalueweighted}\tilde\ell(q)\cdot\inf\{\mu(x) \omega(|x|): x\in \mbox{image of }q\}\leq \ell(p).\end{equation}
Compare with \eqref{meanvalue}. So, $\tilde\ell(q)<\infty$ provided  the above infimum is positive. Since $(X,\tilde d)$ is also complete, the path $q$ can be extended to whole interval $I=[0,1]$. Therefore, we can carry on as in the proof of Theorem \ref{firstcharacterization} to conclude that the corresponding weighted version of the Earle-Eells condition, equivalent to \ref{strongsubmersionweighted}, implies that $f$ is a smooth covering map.
\end{remark}

Furthermore, we can proceed stepwise as in the proof of  Lemma 4.4 of \cite{oliviajesuscoverings} to conclude that if $f$ is a Fredholm  map of index $0$ between connected Finsler manifolds then the infimum of $\mu(x)$ over the image of a local lifting $q$ is positive if and only
the infimum of $\mu(x)\omega(d(x_0,x))$ over the same set is positive for some $x_0\in X$ and nonincreasing weight $\eta(\rho)=\frac{1}{\omega(\rho)}$. Therefore, the {\it Earle-Eells condition with nonincreasing weight}:
\begin{enumerate}[label={\bf (C24)}]
\item \label{earleellweighted} $X$ is complete and there exists a  nonincreasing weight $\eta(\rho)=\frac{1}{\omega(\rho)}$ such that for every $y\in Y$ there exists $\alpha>0$ and a neighborhood $V$ of $y$ such that $\mu(x)\omega(d(x_0,x))\geq\alpha, \mbox{ for all } x\in f^{-1}(V)$ and for some $x_0\in X$.
\end{enumerate}
implies that $f$ is a smooth covering map.  We can  deduce that  {\it $f$  satisfies the infinite version of Hadamard integral condition  if and only if there exists a nonincresing weight $\eta$ such that $\mu(x)\geq\eta(d(x_0,x))$ for every $x\in X$ and some $x_0\in X$}.

Note that, if $f$ is Fredholm map of index $0$ that satisfies \ref{strongsubmersionweighted} or \ref{earleellweighted} then the map $f$ is actually a local diffeomorphism. Furthermore, the constant map $\eta(\rho)= 1$ is both continuous and nonincreasing, so a consequence of Lemma \ref{lemmafinitefibre} in the appendix is the following fact: Let $X$ and $Y$ be connected Finsler manifolds. Assume that $X$ is complete and $Y$ is simply connected. If $f:X\rightarrow Y$ is a Fredholm map of index $0$ then the following statements are equivalent:
\begin{enumerate}
\item[\buletita] $f$ is a strong submersion.
\item[\buletita] $f$ is a strong submersion with  continuous weight.
\item[\buletita] $f$ satisfies the Earle-Eells condition.
\item[\buletita] $f$ satisfies the Earle-Eells condition with nonincreasing weight.
\item[\buletita] $f$ is a global diffeomorphism.
\end{enumerate}
This equivalence is no longer true if the space $Y$ is not simply connected; see Example 4.5 of \cite{oliviajesusfibrations}.

\section{Submersions as global projections}
Let $T:\mathbb{R}^n\rightarrow\mathbb{R}^m$  be a linear map, where $n\geq  m$. Consider the linear system $T(x)=y$ with rank of $T$ equal to $m$. An elementary linear algebra argument shows that there is a change of basis $\Phi$ such that $T\Phi^{-1}=P$ where $P:\mathbb{R}^n\rightarrow\mathbb{R}^m$ is a projection map. The nonlinear version of this fact  for continuously differentiable  maps $f:\mathbb{R}^n\rightarrow\mathbb{R}^m$ such that $df(x)$ has rank $m$ for some $x\in\mathbb{R}^n$ is a consequence of the classical Inverse Mapping Theorem \cite[p. 43]{spivak}, in this case $f$ looks in a neighborhood of $x$ like a projection onto $\mathbb{R}^m$. To be more precise and broader,  given a product of two open sets of Banach spaces $W_1\times W_2$ and a Banach space $F$, a mapping $h:W_1\times W_2\rightarrow F$ is said to be equivalent to a projection  if $h$ can be factored into an ordinary projection and a homeomorphism of $W_1$ onto an open subset of $F$. If $f:E\rightarrow F$ is a $C^1$ map such that $df(x)$ is onto and $\Kernel df(x)$ splits for some $x\in X$ ---e.g. if $f:X\rightarrow Y$ is a nonlinear Fredholm map of positive index--- then there exists an open subset $W$ of $x$ and a homeomorphism $\Phi: W\rightarrow W_1\times W_2$ such that the composite map $f\circ\Phi^{-1}$ is equivalent to a projection \cite[p. 19]{lang}. More generally, let $f:X\rightarrow Y$ a $C^1$ map between $C^1$ Banach manifolds. A map $f$ is said to be a  {\it split submersion} if $df(x)$ is onto  and $\Kernel df(x)$ splits for all $x\in X$. If $f$ is a split submersion then  for every $x\in X$ there is a chart $(W,\varphi)$ at $x$, a chart $(V,\psi)$ at $f(x)$, and a homeomorphism $\Phi:\phi(W)\rightarrow W_1\times W_2$ with $W_1$ and $W_2$ open in some Banach spaces such that the map $\psi f\varphi^{-1}\circ \Phi^{-1}$ is equivalent to a projection  \cite[p. 27]{lang}. In particular, if $f$ is surjective then  each fibre $f^{-1}(y)$ is a closed differentiable submanifold of $X$ ($f$ foliates $X$).

\begin{remark}
A Fredholm map with positive index without critical points is, of course, a split submersion. Therefore, if $X$ is connected and $f$ is onto then every $y\in Y$ is a regular value and $f^{-1}(y)$ is a closed submanifold of $X$ with $$\dim f^{-1}(y)=\dim\Kernel df(x)=\Index f.$$ Nevertheless, if $X$ is  Riemannian then any map $f:X\rightarrow Y$ without critical points is a split submersion even if it is not Fredholm. 
\end{remark}

In the same spirit as in previous sections we request global properties of $f$. A particular case of split submersion is obtained when $f$ is a local diffeomorphism which, in a desired situation, is a covering map. A covering space  is generalized by the concept of {\it fibre bundle}. A map $f$ is a fibre bundle if it is onto and there exists a covering $\{V\}$ of $Y$ and a topological space $\mathcal F$ such that each $f^{-1}(V)$ is homeomorphic to $V \times \mathcal{F}$ by a map $\Phi_V$ and $f\circ \Phi_V^{-1}$ is the projection on the first factor. In particular, every fibre $f^{-1}(y)$ is homeomorphic to  $\mathcal F$.  Furthermore, if $Y$ is contractible then $f$ is a trivial fibre bundle with trivialization $\mathcal F\times Y$ homeomorphic to $X$ \cite[p. 102]{spanier}, so we can talk about a ``global projection''.

The problem is: When is a split submersion a fibre bundle? This is an old question whose first answer was given by the Ehresmann Theorem (1950) \cite{ehresmann}: {\it If $\dim X=n$, $\dim Y=m$, $n\geq m$, and $f:X\rightarrow Y$ is a $C^\infty$  proper submersion then it is a fibre bundle}. Note that a mapping between finite-dimensional manifolds  is a submersion if and only if it is a Fredholm map without critical points with  $\Index f= n-m\geq 0$.

The proof of Ehresmann runs  as follows: Let $p$ be a smooth curve in $Y$ beginning at $f(x_0)$ for some $x_0\in X$.   In the finite-dimensional context, a {\it horizontal lifting} of $p$ is a path $q$ in $X$ such that $f\circ q = p$ and its tangent vector field is horizontal, namely $\dot{q}(t)\in\Kernel df(q(t))^{\perp}$.  Since $f$ is a submersion by a differential equations argument (this will be explained shortly):
\begin{itemize}
\item[\buletita] there is at most one horizontal lifting of $p$ beginning at $x_0$;
\item[\buletita] A horizontal lifting of $p$ always exist locally. 
\end{itemize}
{\it If the horizontal liftings can be defined in whole $I=[0,1]$} as well ---e.g. if $f$ is proper---  then the set of horizontal subspaces $\{\Kernel df(x)^{\perp}: x\in X\}$ is an Ehresmann connection for $f$ and the map $f$ is a fibre bundle, as outlined below. Note that $f$ is an open map since it is a submersion. If $f$ is proper then it is also surjective. 

At the beginning of the sixties, based on the ideas of Ehresmann, Hermann \cite{hermann} established the following metric condition  for a $C^\infty$ {\it surjective} submersion $f:X\rightarrow Y$ between finite-dimensional Riemannian manifolds to be a fibre bundle, cf. \ref{localisometry}:
\begin{enumerate}[label={\bf (C25)}]
\item\label{hermanncondition} $X$ is complete and  for all $x\in X$, $y=f(x)$, the canonical isomorphism $\widehat{df(x)}:X/{\Kernel df(x)}\rightarrow T_{y}Y$ preserves the inner products defined by the metrics on these spaces ({\it Riemannian submersion}).
\end{enumerate}
By condition \ref{hermanncondition} the horizontal lifting of a segment curve $p$ in $Y$ has the same length. The usual process of continuation runs into no obstruction since the local liftings always lie in a fixed and bounded, thus a compact region of $X$. Note that the linear projection $p(x,y)=x$ is a trivial example of a non-proper Fredholm map of positive index satisfying the Hermann's conditions for the Euclidean metric in $X=\mathbb{R}^2$ and $Y=\mathbb{R}$. See also \cite{rinehart} and \cite{wolf} for some related results published slightly after.

The properness condition, the Hermann condition, and the ideas in the Ehresmann proof, remain all in the same spirit as in the previous sections. Our goal now is to connect all of the above conditions for split submersions between Banach or Finsler manifolds. To this end, in the next section we introduce the ideas behind the Eells-Earle Theorem.

\subsection{The Earle-Eells Theorem}

The Ehresmann Theorem has been widely reported in the literature, but  the works do not address extensions to the infinite-dimensional setting. An important exception is an article by Earle and Eells \cite{eells} published in the late sixties. They consider a  split submersion map $f:X\rightarrow Y$   between Finsler manifolds and a {\it locally Lipschitz right inverse} of $df$ namely, bundle maps $$s:f^{*}(TY)\rightarrow TX$$  such that for every $x\in X$,
\begin{enumerate}
\item[\buletita] $s(x):T_{f(x)}Y\rightarrow T_xX$ is continuous and linear;
\item[\buletita] for every charts $(W,\varphi)$ at $x$  and $(U,\psi)$ at $f(x)$ with $U\subset f(W)$, the map $d\varphi(\varphi^{-1}(\cdot))s(\varphi^{-1}(\cdot))d\psi(f(\varphi^{-1}(\cdot)))^{-1}$  is locally Lipchitz on $\varphi(W)$.

\item[\buletita] $df(x)s(x)$ is the identity map on $T_{f(x)}Y$.
\end{enumerate}
The symbol  $f^*(TY)$ denotes the vector bundle over $X$ obtained by pulling back $TY$ via $f$. {\it The minimum smoothness required for all results in this section is $C^1$ with locally Lipschitz continuous derivative for  the mapping $f$ and $C^2$ for the manifolds $X$ and $Y$}, so the tangent bundles $TX$ and $TY$ are $C^1$ Banach space bundles.

The Earle-Eells Theorem can be stated as follows. Let $f:X\rightarrow Y$ be a {\it surjective} split submersion between  Finsler manifolds (see Remark \ref{rabieronto}). Then $f$ is a fibre bundle provided:
\begin{enumerate}[label={\bf (C26)}]
\item\label{rightinverse} $X$ is complete and there is a locally Lipschitz right inverse $s$ of $df$ such that for each $y\in Y$ there is a number $\alpha>0$ and a neighborhood $V$ of $y$ such that $\|s(x)\|^{-1}\geq \alpha$ for all $x\in f^{-1}(V)$.
\end{enumerate}
As  shown in the sketch of the proof presented below, to get a fibre bundle it is enough to lift the straight line segments $p_z$ joining $z=f(x)$ to $y$ relative to a chart $(V_y,\psi)$ centered at $y$ where $\psi(V_y)$ is an open ball centered at $0=\psi(y)$, for all $x\in f^{-1}(V_y)$. Also, the liftings $q_x$ must vary continuously and the correspondence $x\mapsto q_x$ must be well defined. So, for every $w\in \psi(V_y)$ we can consider the initial value problem:
\begin{eqnarray*}
\dot q(t)  &= & \xi(q(t))\\
q(0) &=& x
\end{eqnarray*}
where $\xi:X\rightarrow TX$ is a vector field on $X$ defined by $\xi(x)=s(x)[d\psi(f(x))]^{-1}w$. If $s$ is a locally Lipschitz right inverse of $df$ then $\xi$  is a locally Lipschitz vector field on $X$. Therefore for each $x$ in $f^{-1}(V_y)$ and $w$ in $\psi(V_y)$ the above equation has a unique solution $q_x(t)$ in $f^{-1}(V_y)$  defined on an open interval $J$ containing $0$ \cite[p. 116]{palais}. This solution defines  a unique horizontal  path relative to $s$ which is a local lifting of the path $p_z(t)=\psi^{-1}(\psi(z)+tw)$ defined on a maximal domain $[0,\epsilon)$. Recall, according to Earle and Eells, in an infinite-dimensional context a  path $q$ in $X$ is called {\it horizontal relative to} $s$ if $\dot q(t)$ belongs to the image space $s(q(t))T_{f(q(t))}Y$. In particular, for $w=-\psi(z)$ we get a suitable local lifting for $p_z$. 

If $s$ is locally bounded  over $Y$, as  is required by condition \ref{rightinverse}, then it is easy to check that $\ell(q_x)<\infty$ and by completeness of $X$ the path $q_x$ can be defined in whole  $I=[0,1]$ \cite[p. 27]{eells}. Actually,  Earle and Eells  reason by contradiction: if $\epsilon<1$ then there is a Cauchy sequence $\{q(t_n)\}$ converging to some point in $X$ and this implies that the domain of $q$ can be extended in a smooth  manner to an open interval in $I$ containing $[0,\varepsilon)$, thus contradicting the maximality property of  $\epsilon$. This argument inevitably leads us to think of the continuation property. We shall return to this point in the next subsection.

\

\noindent{\sl Sketch of the proof of Earle-Eells Theorem}. Let $y\in Y$ and let $(V_y,\psi)$ be a  chart centered at $y$ such that $\psi(V_y)$ is an open ball in a Banach space centered at $0=\psi(y)$. For every $z\in V_y$ there exists a unique straight line segment $p_z$ relative to $\psi$ joining $z$ to $y$. For any $x\in f^{-1}(V_y)$ ($f^{-1}(V_y)\neq\emptyset$ because $f$ is surjective) consider the line path $p_{f(x)}$ in $V_y$ and the horizontal lift (relative to $s$) $q_x$ starting at $x$ and ending in $f^{-1}(y)$. Then the mapping $\Phi_V:f^{-1}(V_y)\rightarrow V_y\times f^{-1}(y)$ defined by $$\Phi_{V_y}(x)=(f(x), q_x(1))$$ is the desired homeomorphism. The bijection of  $\Phi_V$ and the continuity of $\Phi_V$ and $\Phi_V^{-1}$ follows from the fact that $p_{f(x)}$ is a line segment relative to $\psi$ and from the continuity of the solutions of the corresponding differential equation with respect to the initial conditions. For every $z\in V_y$ there is a homeomorphism  between $f^{-1}(z)$ and $f^{-1}(y)$ obtained by mapping  each $u\in f^{-1}(z)$ into the end-point of the horizontal lifting of $p_z$ starting at $u$.

\begin{remark}\label{rabieronto}{\it  The connectedness of $Y$ implies that $f$ is onto.} So far, the idea has been to establish conditions to ensure that the horizontal liftings  exist globally once it has been tested or assumed that $f$ is onto. 
Nevertheless, Rabier proved with an elementary topological argument that  the surjectivity condition on $f$ can be replace by the connectedness of $Y$; see proof of Theorem 4.1 of \cite{rabier}, cf. Remark \ref{conexidadimplicasobre}. 
\end{remark}

A   split submersion $f:X\rightarrow Y$ has a locally Lipschitz right inverse $s$ of $df$ in the following cases: 
\begin{enumerate}
\item[\buletita] If $X$ and $Y$ are Hilbert spaces then there is a canonical right inverse $s$. Actually, if $y=f(x)$ we can set $s(x):T_yY\mapsto \Kernel df(x)^\perp$ as the inverse of  $df(x)|_{\Kernel df(x)^\perp}$ given by an explicit formula $df(x)^*[d(x)df(x)^*]^{-1}$. In particular, if $df$ is locally Lipschitz, so is $s$; see Lemma 2.5 of \cite{plastockprojections}. 
\item[\buletita]If $X$ and $Y$ are Banach spaces and $df$ is locally Lipschitz then a locally Lipschitz right inverse $s$ can be constructed  by means of a locally Lipschitz partition of unity; see Lemma 2.6 of \cite{plastockprojections}. The same kind of construction can be used to extend this result  for $X$ and $Y$ Finsler manifolds of class $C^{2}$; see Lemma 3B of \cite{eells}. See also Proposition 2.1 of \cite{rabier} for an explicit construction when $Y$ is a Banach space. 
\item[\buletita] If $f$ is a local diffeomorphism then there is only one right inverse given by $s(x)=df(x)^{-1}$ and any local lifting of a $C^1$ path is horizontal relative to $s$. Besides, condition \ref{rightinverse} coincides with the Earle-Eells condition, hence the name. 
\end{enumerate}

\subsection{Topological conditions}

Let $f:X\rightarrow Y$ be a split submersion between Banach manifolds  with locally Lipschitz right inverse for $df$. Assume  $Y$ is  connected. On the one hand, a simple adjustment in the proof of Earle-Eells Theorem shows that {\it if $f$ has the continuation property for the set of all $C^1$ paths then $f$ is a fibre bundle}; see Theorem 2.3 of \cite{oliviajesusfibrations}. For example, a weakly proper map  has the continuation property for $C^1$ paths. If $Y$ is a Banach space, as before, we can restrict the continuation property for lines (condition $L$). This fact was basically noted by Plastock; see Theorem 2.9 of \cite{plastockprojections}. On the other hand, every fibre bundle has the path lifting property \cite[pp. 92, 96]{spanier}. Therefore, each of the following statements implies the next:
 \begin{enumerate}
\item[\buletita] $f$ has the continuation property for the set of $C^1$ paths.
\item[\buletita] $f$ is a fibre bundle.
\item[\buletita] $f$ has the path lifting property.
\end{enumerate}
We have pointed out before that, if $f$ is a local diffeomorphism then all three conditions are equivalent, but this can't be extended to this context. For example, consider the linear projection $\pi(x,y)=x$. The path $p(t)=t$ in $\mathbb{R}$ has local lift $q(t)=\left(t,\frac{1}{t-1}\right)$ defined on $[0,1)$ but there is no  sequence $t_n\rightarrow 1$ such that $q(t_n)$ converges in $\mathbb{R}$. This also shows that the continuation property is not appropriate in important situations. However, the good news is that you only need to apply the monodromy argument to the horizontal liftings corresponding to the line segments relative to a chart,  as we exemplify in the next paragraph. Note that no Finsler structure is needed here.

Now, suppose that $f$ satisfies condition \ref{browder} (Browder). Then for every $y\in Y$ we can choose a chart $(V_y,\psi)$ centered at $y$ such that $f$ is a closed mapping on each component of $f^{-1}(V_y)$ into $V_y$. The integral curves $q_x(t)$
of the initial value problem defined on a maximal interval  $[0,\epsilon)$ considered by Earle and Eells lie in a connected component of $f^{-1}(V_y)$. Let $C$ be the closure of the image of $q_x$. Since $f(C)$ is closed, there exists $x^*\in C$ such that $f(x^*)=p_z(\epsilon)$. Therefore there is an increasing sequence $\{t_n\}$ in $[0,\epsilon)$ convergent to some $t^*$ such that $p_z(t^*)=p_z(\epsilon)$, so $t^*=\epsilon$. Then the path $q_x$ can be extended outside $[0,\epsilon)$ contradicting its maximality. Therefore, $q_x$ can be extended to $I=[0,1]$. So, {\it Browder condition implies that $f$ is a fibre bundle.} The above argument can be used to prove that {\it if $f$ is a closed map then it is a fibre bundle}.

If $f:X\rightarrow Y$ is a proper submersion map between connected Banach manifolds then it is closed surjective map. Let $x\in X$ and $y=f(x)$ thus $f^{-1}(y)$ is a compact submanifold of $X$, hence a finite-dimensional submanifold of $X$ such that $T_xf^{-1}(y)=\Kernel df(x).$ The connectedness of $X$ implies that $\dim \Kernel df(x)=k$ for all $x\in X$ and for some integer $k\geq 0$. Therefore $f$ is Fredholm of nonnegative index. Now, if $f$ is a closed Fredholm map of nonnegative index with locally Lipschitz right inverse for $df$ then it is a fibre bundle such that $\mathcal F$ is a compact  submanifold of $X$ of dimension $\Index f$; see proof of Corollary 2.9 of \cite{oliviajesusfibrations}. Finally, every fibre bundle with compact fibre is a proper map. So, if $f:X\rightarrow Y$ is a  submersion between connected Banach manifolds  with locally Lipschitz right inverse for $df$ then the following statements are equivalent:
 \begin{enumerate}
\item[\buletita] $f$ is a proper map.
\item[\buletita] $f$ is a closed Fredholm map of nonnegative index.
\item[\buletita] $f$ is a fibre bundle with compact fibre $\mathcal F$.
\end{enumerate} 
If $X$ and $Y$ are Banach spaces then a proper Fredholm map of positive index with locally Lipschitz right inverse for $df$ must have a singularity \cite{plastockberger};  see also Proposition 3.1 of \cite{plastockprojections}. In fact,  if $X$ and $Y$ are contractible then $f$ is a closed map if and only if $f$ is a proper map if and only if $f$ is a homeomorphism;  see proof of Corollay 2.10 of \cite{oliviajesusfibrations}. This makes clear the limitations of the properness (or closedness) condition, especially in infinite dimension where even Banach spheres are contractible.

\subsection{Metric conditions via  surjectivity indicator}
Let $f:X\rightarrow Y$ be a split submersion between Finsler manifolds.
In view of the metric conditions stated before for local diffeomorphisms, it is natural to ask: What is the relationship between $\|s(x)\|$ and the surjectivity and injectivity indicators for split submersions? First, note that if $df(x)$ has a nontrivial kernel then $\inj df(x)=0$. So the injectivity indicator of $df(x)$, as expected, remains left out of the running. Suppose that $T:X\rightarrow Y$ is a surjective linear map between Banach spaces. Consider the canonical isomorphism $\hat T:X/\Kernel T\rightarrow Y$. It holds that \cite[Rmk. 4.1]{oliviajesusfibrations}: $$\sur T=\sur \hat T=\inj\hat T= \|\hat T^{-1}\|^{-1}.$$ Thus, the Riemannian condition \ref{hermanncondition} means that $\sur df(x)=1$ for all $x\in X$. So, it would only seem logical to ask whether this condition ---and more generally, all metric conditions given before in terms of the surjectivity indicator--- can be carried on in order to get fibre bundles between Finsler manifolds: the answer is yes, provided $f$ has uniformly split kernels.

A map $f:X\rightarrow Y$ between Finsler manifolds is said to have {\it uniformly split kernels} if there is a constant $c>0$ such that for each $x\in X$ there is a projection $P_x\in L(T_xX)$ with $\Kernel P_x=\Kernel df(x)$ and $\|P_x\|_x\leq c$. This concept was introduced by Rabier \cite{rabier} in the late nineties. A map $f$ has uniformly split kernels, for example, if $X$ is Riemannian, $Y$ is finite-dimensional or if $f$ is a Fredholm submersion of nonnegative index; see Lemma 4.2 and Proposition 3.1 of \cite{rabier}. For submersions with uniformly split kernels with a locally Lipschitz derivative between $C^{2}$ connected Finsler manifolds exist a locally Lipschitz right inverse $s$ of $df$ and a constant $c>0$ such that for every $x\in X$:
\begin{equation}\label{constantrabier}\sur df(x)\leq c\|s(x)\|^{-1}.\end{equation}
So, we can consider that for mappings with uniformly split kernels a cleaner version of \ref{rightinverse}, namely, for each $y\in Y$ there is a number $\alpha>0$ and a neighborhood $V$ of $y$ such that $\sur df(x)\geq \alpha$ for all $x\in f^{-1}(V)$. But, as before, this is only a different way to state the condition \ref{strongsubmersion} when $X$ is complete. This leads to the Rabier Theorem 4.1 of \cite{rabier}: {\it If $f:X\rightarrow Y$ is a  strong submersion with uniformly split kernels and locally Lipschitz derivative between $C^{2}$ connected Finsler manifolds and $X$  is complete then $f$ is a fibre bundle}. Actually, by the arguments given above, we can replace the strong submersion condition in the last sentence in italics by  a strong submersion condition with continuous weight (condition \ref{strongsubmersionweighted}); see Remark 4.4 of \cite{rabier} or even nonincreasing weight; see Lemma 3.1 of \cite{oliviajesusfibrations}. So, the global inversion conditions stated before in terms of $\mu(x)$ can be carried on in this setting, replacing $\mu(x)$ by $\sur df(x)$. For example, if  $f$ satisfies the {\it Katriel, Ioffe, or Hadamard integral condition} (see \cite[Ex. 4.6]{oliviajesusfibrations} for an example of a map satisfying the Hadamard integral condition but which is not a strong submersion). The hypothesis in the Rabier Theorem can be weakened and we can consider the submersion  $f:U\rightarrow Y$ with $U$ open subset of $X$ such that there is no sequence $\{x_n\}$ from $U$, converging to a point on $\partial U$, and such that $f(x_n)$ converges to a point in $Y$, as he indeed pointed out. If $Y$ is contractible, there is a submanifold of $\mathcal F$   of $X$ (the fibre of $f$) and a homeomorphism $\Phi:\mathcal F\times Y\rightarrow X$ such that $f(\Phi(x,y))=y$ for all $x\in \mathcal F$ and $y\in Y$. 
An additional smoothness of $\Phi$ can be established if the Banach space model of $X$ admits a smooth enough partition of unity.

Finally, we propose an extension of property \eqref{janjohansson}. Assume $X$ is a complete connected $C^{2}$ Finsler manifold,  $F$ is a Banach space, and  $f:X\rightarrow F$ is a submersion with uniformly split kernels with a locally Lipschitz derivative.  For each $\rho\geq 0$ let $$\eta(\rho)=\frac{1}{c}\inf_{d_X(x_0,x)\leq \rho}\sur df(x)$$ 
where $x_0\in X$ and $c$ is the constant satisfying \eqref{constantrabier}. Given $r>0$ set  $\varrho=\int_{0}^r \eta(\rho) d\rho$. Theorem \ref{globalgraves} asserts that:
$$\varrho>0\mbox{ implies }B_\varrho(f(x_0))\subset f(B_r(x_0)).$$ We have the following observations:
\begin{enumerate}
\item[\buletita] Because $f$ is a submersion, $df(x_0)$ is onto, so $\sur df(x_0)>0$. Also, the function  $x\mapsto \sur df(x)$  is continuous; see Remark 2.1 of \cite{rabier}. Then there is a $\alpha'>0$ and $r>0$ such that $\sur df(x)>\alpha'$ for all $x\in B_{r}(x_0)$. Therefore if $\alpha=\frac{\alpha'}{c}$ then we have: $$B_{\alpha r}(f(x_0))\subset f(B_r(x_0)).$$
This make a connection with the conclusion of the Graves Theorem; see for instance Theorem 1.2 of \cite{dontchev}.
\item[\buletita] The above inclusion implies that  the Ioffe surjection constant  of $f$ at $x_0$ is positive since $\mbox{sur}(f,x_0)\geq\alpha>0$. 
\item[\buletita] If $\lim_{r\rightarrow\infty}\varrho(r)=\infty$, that is, if $f$ satisfies the Hadamard integral condition, then there is a submanifold of $\mathcal F$   of $X$  and a homeomorphism $\Phi:\mathcal F\times F\rightarrow X$ such that for all  $y\in Y$ the solutions of the equation $y=f(u)$ are of the form $u=\Phi(x,y)$ for each $x\in\mathcal F$. Furthermore if $|y-f(x_0)|\leq\varrho(r)$ then $d_X(u,x_0)<r$.
\item[\buletita] If $X$ is Riemannian then  $c=1$ \cite[p. 656]{rabier}. Suppose also that $F$ is a Hilbert space and $f$ is a Riemannian submersion (condition \ref{localisometry}). Then $\eta(\rho)=1$ for all $\rho>0$ and $\varrho(r)=r$. So $B_r(f(x_0))\subset f(B_r(x_0))$ for any $x_0\in X$. Actually, it is easy to see that $f(B_r(x_0))\subset B_r(f(x_0))$  since the canonical isomorphism $\widehat{df(x)}:X/{\Kernel df(x)}\rightarrow T_{y}Y$ preserves the inner products. Therefore $$B_r(f(x_0))=f(B_r(x_0)).$$ As expected, $f$ is a {\it submetry}. Just to complete the picture, it remains to say that, at least in the finite-dimensional context, every submetry between Riemannian manifold is a Riemannian submersion; see Theorem A of \cite{berestovskiiluis}. 
\end{enumerate}

I would like to thank the referee for the careful review of the previous versions of this paper.

%%% OPEN QUESTIONS
%%{Fredholm map of negative index as global inmersions}

\appendix

\section{Proofs and extra remarks}

\begin{remark}\label{conexidadimplicasobre}
{\it The connectedness of  $Y$ implies that $f$ is onto: another proof} (inspired by the second half of the proof of Theorem 4.1 of \cite{rabier}).  Let $f:X\rightarrow Y$ be a map between Banach manifolds. For every $y\in Y$, let $V_y$ be a domain of a  chart of $Y$ centered at $y$ and $U_y=f^{-1}(V_y)$. {\it Assume that} $U_y\neq\emptyset$ and {\it every line segment relative to a chart can be lifted}. Therefore $f|_{U_y}:U_y\rightarrow V_y$ is onto.  Consider the set $\bar Y=\{y\in Y: f^{-1}(V_y)\neq\emptyset\}.$
The set $\bar Y$ is not empty since, in fact, the set $f(X)$ is contained in $\bar Y$. Furthermore $\bar Y$ is open since $f|_{U_y}$ is onto for $y\in\bar Y$. Now let $y$ be in the boundary of $\bar Y$ such that $V_y\cap\bar Y\neq\emptyset$ and let $z\in V_y\cap\bar Y$. Then $f|_{U_z}:U_z\rightarrow V_z$ is onto. Thus, there is a $x\in U_z$ such that $f(x)=z$. On the other hand, $z\in V_y$ implies that $x\in f^{-1}(V_y)$ 
whereby $f^{-1}(V_y)\neq\emptyset$ and therefore $y\in\bar Y$. So $\bar Y$ is also closed in $Y$. By connectedness of $Y$ we have that $\bar Y=Y$ hence $f$ is onto. This reasoning is important because it implies that {\it we just need to lift the paths $p_z$ for $z$ close to $y\in f(X)$ when $Y$ is connected}. 
\end{remark}

\begin{lemma}\label{cartan-hadamardimages} The set {\it $S_{y_0}$ is open and the mapping $f_x^{-1}$ is an inverse of $f$ with domain $S_{y_0}$} \end{lemma}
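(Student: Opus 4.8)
The plan is to establish the three assertions in order: that $f_x^{-1}$ is well defined, that it is a right inverse of $f$, and that its domain $S_{y_0}$ is open. The only substantial point is the openness, and everything rests on two features of the Cartan--Hadamard manifold $Y$: the minimizing geodesic $p_z$ joining $y_0$ to $z$ is \emph{unique}, and it varies continuously with $z$. Indeed, by the Cartan--Hadamard Theorem $\exp_{y_0}:T_{y_0}Y\rightarrow Y$ is a global diffeomorphism, so one may write $p_z(t)=\exp_{y_0}\bigl(t\,\exp_{y_0}^{-1}(z)\bigr)$, and then $(t,z)\mapsto p_z(t)$ is continuous; in particular $p_z\rightarrow p_{z_0}$ uniformly on $[0,1]$ as $z\rightarrow z_0$.

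First I would record the uniqueness of liftings. Since $f$ is a local homeomorphism, any two liftings of a path that agree at one point agree everywhere: the set on which they coincide is open by local injectivity and closed by continuity, hence all of $[0,1]$ by connectedness. Consequently, for each $z\in S_{y_0}$ the lifting $q_z$ with $q_z(0)=x$ is unique, so $f_x^{-1}(z):=q_z(1)$ is well defined. Moreover $f(f_x^{-1}(z))=f(q_z(1))=p_z(1)=z$ shows that $f\circ f_x^{-1}=\mathrm{id}_{S_{y_0}}$, and $f_x^{-1}(y_0)=x$ because the constant geodesic at $y_0$ lifts to the constant path at $x$.

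The heart of the argument is the openness of $S_{y_0}$, which I would prove by a continuation argument. Fix $z_0\in S_{y_0}$ with lifting $q_{z_0}:[0,1]\rightarrow X$. Its image is compact, so I can cover it by open sets on which $f$ restricts to a homeomorphism onto its image; pulling this cover back through $q_{z_0}$ and invoking a Lebesgue-number argument, I obtain a partition $0=t_0<t_1<\dots<t_n=1$ and open sets $W_1,\dots,W_n$ with $q_{z_0}([t_{i-1},t_i])\subset W_i$ and $f|_{W_i}$ a homeomorphism onto an open set $f(W_i)\subset Y$. By the continuous dependence of $p_z$ on $z$, for $z$ sufficiently close to $z_0$ the segment $p_z([t_{i-1},t_i])$ still lies in $f(W_i)$ for every $i$, so I can build a lifting $q_z$ piecewise by setting $q_z=(f|_{W_i})^{-1}\circ p_z$ on $[t_{i-1},t_i]$ and matching the nodes inductively. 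This yields a lifting of $p_z$ starting at $x$ and defined on all of $[0,1]$, whence $z\in S_{y_0}$; this proves openness. The same estimates give $q_z\rightarrow q_{z_0}$ uniformly as $z\rightarrow z_0$, so $f_x^{-1}$ is continuous, and together with $f\circ f_x^{-1}=\mathrm{id}$ and the local-homeomorphism property this makes $f_x^{-1}$ a homeomorphism of $S_{y_0}$ onto the open set $f_x^{-1}(S_{y_0})\subset X$, i.e.\ a genuine inverse branch of $f$.

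I expect the main obstacle to be the endpoint-matching in the patching step: one must check that the value produced at each node $t_i$ by $(f|_{W_i})^{-1}$ agrees with the starting value of the branch on the next subinterval, which is exactly where the uniqueness of liftings and the local injectivity of $f$ are used together. The role of the Cartan--Hadamard hypothesis is precisely to make $\{p_z\}_{z\in Y}$ a well-defined, continuously varying family of paths emanating from $y_0$; without unique minimizing geodesics neither the star $S_{y_0}$ nor the map $f_x^{-1}$ would even be defined, and the continuity of $z\mapsto p_z$ is what powers the openness conclusion.
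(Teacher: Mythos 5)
Your proposal is correct and follows essentially the same route as the paper's proof: cover the compact image of $q_{z_0}$ by finitely many sets on which $f$ is invertible, partition $[0,1]$ accordingly, use the continuity of $(t,z)\mapsto p_z(t)$ (which the paper draws from the Cartan--Hadamard/exponential structure and Theorem 2.6 of \cite{oliviajesuscoverings}) to keep $p_z$ inside the same cover for $z$ near $z_0$, and patch the local inverses. The node-matching you flag as the main obstacle is exactly where the paper spends its effort, choosing radii $\delta_j$ so that $p_w(t_j)$ lands in $f(V_j\cap V_{j+1})$, where the consecutive local inverses are both defined and coincide; your appeal to uniqueness of liftings plus local injectivity resolves it once that containment is secured.
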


\begin{proof}
Let $y_0\in Y$ and let $p_z$ be the unique minimizing geodesic segment joining $y_0$ to $z$ in $Y$ (Corollary 1.12 of \cite{neeb}). Let $S_{y_0}$ be the star with vertex $y_0$ defined as the set of all $z\in Y$ for which there is a lifting $q_z$ of $p_z$ such that $q_z(0)=x$. Let $f_x^{-1}(z):=q_z(1)$ for $z\in S_{y_0}$ where $q_z$ is the lifting of $p_z$. Let $d_Y$ be the Finsler distance of $Y$. For all $u$ in the image of  $q_z$ there is an open neighborhood  $U^u$ and $r_u>0$ such that  $f|_{U^u}:U^u\rightarrow B_{r_u}(f(u))$ is a homeomorphism. Let $V^u\subset U^u$ be an open set such that $f(V^u)=B_{\frac{r_u}{2}}(f(u))$. For compactness and connectedness of the image of  $q_z$, there are $u_1,\dots, u_m$ in the image of  $q_z$ such that $q_z\subset\bigcup_{k=1}^m V^{u_k}$. Furtheremore, $V^{u_i}\cap V^{u_j}\neq\emptyset$ if and only if $|i-j|\leq 1$. Also $x\in V^{u_1}$ and  $q_z(1)\in V^{u_m}$. For $k=1,\dots,m$, let $V_k=V^{u_k}$, $U_k=U^{u_k}$, $z_k=f(u_k)$, $r_k=r_{u_k}$, and $B_k=B_{\frac{r_k}{2}}(z_k)$. Therefore,  $\bigcup_{k=1}^m V_k$ is an open covering of the image of  $q_z$ and $\bigcup_{k=1}^m B_k$ is an open covering of $p_z$. Let $s_k:B_{r_k}\rightarrow U_k$ be the inverse of $f|_{U_k}$. Let $0=t_0<t_1<\dots<t_m=1$ be a partition of $I$ such that for $k=1,\dots,m$, $q_z[t_{k-1},t_k]\subset V_k$. For $j=1,\dots,m-1$ let $\tilde u_j=q_z(t_j)\in V_j\cap V_{j+1}=\tilde V_j$. The set $\tilde V_j$ is open hence $f(\tilde V_j)$ is open in $Y$ and contains $\tilde z_j=p_z(t_j)$. Furthermore, $f|_{\tilde V_j}:\tilde V_j\rightarrow f(\tilde V_j)$ is a homeomorphism and $s_j$ coincides exactly with $s_{j+1}$ on $f(\tilde V_j)$. Let $\delta_j>0$ such that $B_{\delta_j}(\tilde z_j)\subset f(\tilde V_j)$ and $\epsilon>0$ such that
$$\textstyle 0<\epsilon<{\rm dist}\hspace{0.03in}\left({\rm image}\hspace{0.03in}p_z;Y\setminus\bigcup_{k=1}^m B_k\right),$$ and 
$0<\epsilon<\min\{r_1,\dots,r_m,\delta_1,\dots,\delta_m\}$. Therefore, by continuity of $(t,z)\mapsto p_z(t)$ (Theorem 2.6  of \cite{oliviajesuscoverings}) there is $\delta>0$ such that if $w\in B_\delta(z)$ and $B_\delta(z)\subset B_{r_m}(z_m)$ then $d_Y(p_w(t),p_z(t))\leq\frac{\epsilon}{2}$ for all $t\in I$. For $j=1,\dots,m-1$, if $t\in[t_{j-1},t_j]$ then $d_Y(p_z(t),z_j)<\frac{r_j}{2}$. Therefore,
$$d_Y(p_w(t),z_j)\leq d_Y(p_w(t),p_z(t))+d_Y(p_z(t),z_j)<r_j.$$ So, $p_w[t_{j-1},t_j]\subset B_{r_j}(z_j)$ where the local inverse $s_j$ is defined. On the other hand, $d_Y(p_w(t_j),\tilde z_j)<\frac{\epsilon}{2}<\delta_j$ thus $p_w(t_j)\in B_{\delta_j}(\tilde z_j)$. Therefore, $s_j(p_w(t_j))$ is equal to $s_{j+1}(p_w(t_j))$. In conclusion, the path $q_w$ defined by $s_k\circ p_w$ in each piece $[t_{k-1},t_k]$ for $k=1,\dots,m$ is well defined and is a lifting of  $p_w$ such that $q_w(0)=x$. Finally, $B_\delta(z)\subset S_{z_0}$. Since $f_x^{-1}$ coincides with  $s_m$ in $B_\delta(z)$ then it is continuous in $z$.
\end{proof}

\begin{lemma}\label{riemanniancase}
Let $f:X\rightarrow Y$  be a local homeomorphism between Banach manifolds. Assume $Y$ is Riemannian and connected. If $f$ has the continuation property for the set of minimal geodesics in $Y$ then $f$ is a covering map.
\end{lemma}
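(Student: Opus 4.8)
The plan is to replicate, almost verbatim, the sketch of proof recorded in Section~\ref{topologicalcharacterization} for the $C^k$-path case, substituting the straight segments relative to a chart by the radial minimal geodesics emanating from the centre of a normal ball. The structural difference with respect to Lemma~\ref{cartan-hadamardimages} is that here the exponential map is a diffeomorphism only on a small ball rather than globally, so the construction produces evenly covered neighbourhoods (hence a covering map) instead of a single star-shaped domain of invertibility.

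First I would establish that $f$ is onto. Fix $x_0\in X$ and put $y_0=f(x_0)$, and let $y\in Y$ be arbitrary. Because $Y$ is a connected Riemannian manifold I can join $y_0$ to $y$ by a broken geodesic: covering a continuous path from $y_0$ to $y$ by finitely many normal (geodesically convex) balls, I pick intermediate points $y_0,y_1,\dots,y_N=y$ with each consecutive pair contained in a common convex ball, so that consecutive points are joined by a minimal geodesic segment. Since $f$ is a local homeomorphism, each segment starting at an already lifted endpoint admits a local lifting, and the continuation property for minimal geodesics extends every such local lifting to the whole parameter interval (recall that, as noted after condition~\ref{rheinboldtc}, the continuation property is equivalent to the lifting property for paths in the prescribed class). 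Lifting the segments successively, starting at $x_0$, yields a lift of the entire broken geodesic whose terminal point maps to $y$; hence $y\in f(X)$ and $f$ is surjective.

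Next I would produce evenly covered neighbourhoods. Fix $y\in Y$ and choose $r>0$ so small that $\exp_y\colon B_g(0,r)\to V_y:=B_g(y,r)$ is a diffeomorphism. For every $z\in V_y$ set $p_z(t)=\exp_y\!\left(t\,\exp_y^{-1}(z)\right)$, $t\in[0,1]$, the radial minimal geodesic from $y$ to $z$. For each $u\in f^{-1}(y)$ the continuation property furnishes a lifting $q_z$ of $p_z$ defined on all of $I$ with $q_z(0)=u$, and this lifting is unique because $f$ is a local homeomorphism; set $O_u=\{q_z(1):z\in V_y\}$. The smoothness of $\exp_y$ and of $\exp_y^{-1}$ on $V_y$ makes $(t,z)\mapsto p_z(t)$ jointly continuous, and the continuous dependence of the lift on the lifted path (Theorem~2.6 of \cite{oliviajesuscoverings}, exactly as used in the proof of Lemma~\ref{cartan-hadamardimages}) then shows that each $O_u$ is open, that the family $\{O_u\}_{u\in f^{-1}(y)}$ is pairwise disjoint with $f^{-1}(V_y)=\bigcup_{u}O_u$, and that $f$ maps each $O_u$ homeomorphically onto $V_y$. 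Thus every $y\in Y$ has an evenly covered neighbourhood and $f$ is a covering map.

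I expect the main obstacle to be the bookkeeping in the last step, namely verifying that the sets $O_u$ are pairwise disjoint and exhaust $f^{-1}(V_y)$ and that each is mapped homeomorphically onto $V_y$: this is the delicate monodromy argument carried out in full detail in Lemma~\ref{cartan-hadamardimages}, and it rests precisely on the joint continuity of $(t,z)\mapsto p_z(t)$, which in turn requires $\exp_y^{-1}$ to be well defined and smooth exactly on the normal ball $V_y$. Everything else is a transcription of the already established Riemannian version of the covering-space argument.
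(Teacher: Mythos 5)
Your proposal is correct, and the second half --- constructing the evenly covered neighbourhoods from the radial geodesics $p_z(t)=\exp_y(t\exp_y^{-1}(z))$ in a normal ball, using the joint continuity of $(t,z)\mapsto p_z(t)$ and Theorem~2.6 of \cite{oliviajesuscoverings} to get the disjoint open sets $O_u$ --- is exactly what the paper does. The only genuine divergence is in the surjectivity step. You chain liftings along a broken geodesic obtained by covering a path from $y_0$ to $y$ with normal balls, whereas the paper invokes the open--closed argument of Remark~\ref{conexidadimplicasobre}: the set $\bar Y=\{y\in Y:f^{-1}(V_y)\neq\emptyset\}$ is nonempty, open (because $f|_{f^{-1}(V_y)}$ is onto $V_y$ once radial geodesics can be lifted) and closed, hence all of $Y$. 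Both work; the paper's version has the advantage that it only ever lifts radial geodesics inside a single normal ball at a time, so no concatenation or bookkeeping of intermediate basepoints is needed, and it reuses verbatim an argument already recorded for the chart/line-segment case. Your version is more constructive but leans on slightly more Riemannian machinery than necessary: you do not actually need geodesically \emph{convex} balls --- it suffices to take normal balls centred at points $p(t_i)$ of the path and use that $p(t_{i+1})$ lies in the normal ball about $p(t_i)$, so that the connecting radial geodesic is minimal (this matters in the infinite-dimensional setting, where convexity statements are more delicate than the existence of normal neighbourhoods, which is all that \cite[pp.~222--227]{lang} guarantees). With that small adjustment your argument is a complete and valid alternative.
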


\begin{proof}Let $X$  be a Banach manifold and $(Y,g)$ be Riemannian manifold. For every $y\in Y$ there exists $r$ sufficienty small such that $\exp_y:B_g(0,r)\rightarrow B_g(y,r)$ is a diffeomorphism, where $B_g(0,r)$ and $B_g(y,r)$  are the open ball of radius $r$ centered at $0$ in $T_yY$ and at $y$ in $Y$, respectively.  Then every $z\in B_g(y,r)$ can be joined by a unique minimal geodesic $p_z$ in $V$ ---namely, $\exp_y(tv)$ for  $v=\exp_y^{-1}(z)$--- \cite[pp. 222--227]{lang} and  the map $(t,z)\mapsto p_z(t)$ is continuous.   Let $V_y=B_g(y,r)$. As in Remark \ref{conexidadimplicasobre}, the connectedness of $Y$ implies that the set $\bar Y=\{y\in Y: f^{-1}(V_y)\neq\emptyset\}$ is whole $Y$. Thus $f$ is onto. Finally, by the second part of the proof of Theorem 2.6 of \cite{oliviajesuscoverings}, the continuity of the map $(t,z)\mapsto p_z(t)$ implies that the sets $O_u=\{q_z(1):z\in V\}$ with $u\in f^{-1}(y)$ form the desired disjoint family of open sets. 
\end{proof}

\begin{remark}
If $Y$ is finite-dimensional and complete we have a simpler proof since, by the Hopf-Rinow Theorem, any two points can be joined by a minimal geodesic. This argument can not be applied in general since the Hopf-Rinow theorem fails in an infinite dimension, even more, there exists a  complete infinite-dimensional Riemannian manifold and  two points on there that cannot be joined by any geodesic at all \cite{atkin}. 
\end{remark}

\begin{theorem}\label{firstcharacterization}
If $X$ and $Y$ are Finsler manifolds, $Y$ is connected, and $f:X\rightarrow Y$ is a Fredholm map of index zero then $f$  is a smooth covering map provided $f$ satisfies the Earle-Eells condition. 
\end{theorem}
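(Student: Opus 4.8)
The plan is to reproduce the sketch of the covering-map characterization from Section~\ref{topologicalcharacterization}, using the Earle-Eells condition precisely to force the relevant local liftings to extend to the whole unit interval. First I would note that $f$ is automatically a local diffeomorphism: fixing $x\in X$ and setting $y=f(x)$, condition \ref{earleell} furnishes $\alpha>0$ and a neighborhood $V$ of $y$ with $\mu(u)\geq\alpha$ for all $u\in f^{-1}(V)$; since $x\in f^{-1}(V)$ this gives $\mu(x)\geq\alpha>0$. Because $f$ is Fredholm of index zero, the equivalences recorded earlier show that $\mu(x)>0$ makes $f$ a local diffeomorphism at $x$, and this holds for every $x$.

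Next I would fix $y\in Y$ together with the $\alpha>0$ and neighborhood $V$ given by \ref{earleell}, and choose a chart $(V_y,\psi)$ centered at $y$ with $V_y\subset V$ and $\psi(V_y)$ an open ball about $0=\psi(y)$. For $z\in V_y$ set $p_z(t)=\psi^{-1}(t\psi(z))$; convexity of $\psi(V_y)$ guarantees $p_z(t)\in V_y$ for all $t\in[0,1]$. For $u\in f^{-1}(y)$ let $q_z$ be the unique $C^1$ local lifting of $p_z$ with $q_z(0)=u$, defined on a maximal interval $[0,\varepsilon)$. Since $f(q_z(t))=p_z(t)\in V_y\subset V$, the image of $q_z$ lies entirely in $f^{-1}(V)$, so $\mu\geq\alpha$ along it; applying the mean value inequality \eqref{meanvalue} to each restriction $q_z|_{[0,s]}$ yields $\alpha\,\ell(q_z|_{[0,s]})\leq\ell(p_z)<\infty$, so $q_z$ has uniformly bounded length on $[0,\varepsilon)$.

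The step I expect to be the main obstacle, and the only place where all hypotheses are used simultaneously, is the extension of $q_z$ across the endpoint. Because $\ell(q_z|_{[0,s]})$ is bounded while the tail lengths tend to $0$ as $s\to\varepsilon$, and $d_X(q_z(s'),q_z(s''))\leq\ell(q_z|_{[s',s'']})$, the values $q_z(t)$ form a Cauchy family as $t\to\varepsilon^-$; completeness of $X$ then produces a limit $x^\ast$ with $f(x^\ast)=p_z(\varepsilon)$. If $\varepsilon<1$, the local diffeomorphism property at $x^\ast$ lets the lifting be continued smoothly past $\varepsilon$, contradicting maximality, so $q_z$ is defined on all of $[0,1]$ and $q_z(1)$ is well defined.

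Finally I would assemble the covering structure exactly as in the sketch of Section~\ref{topologicalcharacterization}. Surjectivity follows from Remark~\ref{conexidadimplicasobre}: every segment $p_z$ relative to a chart lifts, hence $f$ maps $f^{-1}(V_y)$ onto $V_y$, and connectedness of $Y$ forces $f(X)=Y$. For fixed $y$, continuity of $(t,z)\mapsto p_z(t)$ together with uniqueness of liftings shows that the sets $O_u=\{q_z(1):z\in V_y\}$, indexed by $u\in f^{-1}(y)$, are open, pairwise disjoint, cover $f^{-1}(V_y)$, and are each carried diffeomorphically onto $V_y$ by $f$. This is precisely the local triviality of a smooth covering map, which completes the argument.
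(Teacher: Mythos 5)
Your proof is correct and follows essentially the same route as the paper's: use the Earle--Eells condition to bound $\mu$ from below on $f^{-1}(V)$, apply the chain-rule inequality \eqref{meanvalue} to get finite length of local liftings of chart line segments, extend by a Cauchy/completeness argument, and assemble the covering structure via Remark \ref{conexidadimplicasobre}. The only difference is that you spell out a few steps the paper delegates to references (the preliminary observation that $f$ is a local diffeomorphism, and the final verification that the sets $O_u$ give local triviality), which does no harm.
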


\begin{proof}
Let $y\in f(X)$. By \ref{earleell} there exists $\alpha>0$ and a neighborhood $V$ of $y$ such that  $\inj df(x)\geq\alpha$ for all $x\in f^{-1}(V)$. Without loss of generality, we can assume that $V$ is the domain of a chart centered at $y$.  Let $p$ be a line segment relative to this chart and let $q$ be a local lifting of $p$ defined on $[0,\varepsilon)$ starting at some point $x_0\in f^{-1}(V)$. Since the image of the path $q$ is contained in $f^{-1}(V)$ and  $f$ is a local diffeomorphism then $\|df(x)^{-1}\|^{-1}=\inj df(x)\geq \alpha$ for all $x$ in the image of $q$. Therefore $\ell(q)<\alpha^{-1}\ell(p)<\infty$. So, for every sequence $\{t_n\}$ in $[0,\varepsilon)$ converging to $\varepsilon$ it is easy to see that $\{q(t_n)\}$ is a Cauchy sequence in $X$. So $\{q(t_n)\}$  converges in $X$; see Lemma 5.1 of \cite{oliviajesuscoverings}. Therefore $q$ can be extended to whole $I=[0,1]$; see proof of Theorem 2.6 of  \cite{oliviajesuscoverings}. We find that $f$ is a smooth covering map taking into account Remark \ref{conexidadimplicasobre}.  The same argument holds if we use the surjectivity indicator instead of the injectivity indicator.
\end{proof}

\begin{lemma}\label{lemmafinitefibre}
If $f$ is a smooth covering map with finite fibre then it satisfies the Earle-Eells condition.
\end{lemma}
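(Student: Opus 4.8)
The plan is to exploit the local product structure of a smooth covering map together with the continuity of the indicator $\mu$, using the finiteness of the fibre to upgrade the pointwise positivity of $\mu$ to a \emph{uniform} lower bound over the preimage of a neighborhood. Since a smooth covering map is in particular a local diffeomorphism, for every $x\in X$ the derivative $df(x)$ is invertible and $\mu(x)=\inj df(x)=\sur df(x)=\|df(x)^{-1}\|^{-1}>0$; moreover, as recalled earlier, the map $x\mapsto\mu(x)$ is continuous. Completeness of $X$ is assumed throughout and is the only part of condition \ref{earleell} that concerns $X$ alone, so it needs no argument; only the uniform estimate on $\mu$ over preimages of neighborhoods remains to be produced.

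First I would fix an arbitrary $y_0\in Y$ and write its finite fibre as $f^{-1}(y_0)=\{x_1,\dots,x_n\}$. Choosing an evenly covered neighborhood $V$ of $y_0$ yields a decomposition $f^{-1}(V)=\bigsqcup_{i=1}^{n}O_i$ into disjoint open sets with $x_i\in O_i$ and $f|_{O_i}:O_i\to V$ a diffeomorphism. On each sheet $\mu$ is continuous and strictly positive, so at $x_i$ there exist $\alpha_i>0$ and an open neighborhood $W_i\subset O_i$ of $x_i$ with $\mu\geq\alpha_i$ on $W_i$; since $f|_{O_i}$ is a homeomorphism onto $V$, the image $V_i:=f(W_i)$ is an open neighborhood of $y_0$.

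Next I would take the finite intersection $V':=\bigcap_{i=1}^{n}V_i$, again a neighborhood of $y_0$, and set $\alpha:=\min_{1\leq i\leq n}\alpha_i>0$. Because $V'\subset V$ is evenly covered, $f^{-1}(V')=\bigsqcup_{i=1}^{n}(f|_{O_i})^{-1}(V')$; and from $V'\subset V_i=f(W_i)$ together with the injectivity of $f|_{O_i}$ one obtains $(f|_{O_i})^{-1}(V')\subset W_i$. Hence every $x\in f^{-1}(V')$ lies in some $W_i$, where $\mu(x)\geq\alpha_i\geq\alpha$. Thus $\mu(x)\geq\alpha$ for all $x\in f^{-1}(V')$, which is precisely the Earle--Eells condition \ref{earleell} at $y_0$; as $y_0$ was arbitrary, the lemma follows.

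The main obstacle is exactly the passage from the pointwise positivity of $\mu$ to a bound uniform across \emph{all} sheets over a single neighborhood of $y_0$, and this is where the finiteness of the fibre is indispensable: it is what keeps the intersection $\bigcap_{i=1}^{n}V_i$ open and the minimum $\min_{1\leq i\leq n}\alpha_i$ strictly positive. For an infinite fibre the sheet-wise constants $\alpha_i$ could accumulate at $0$ and the intersection of infinitely many neighborhoods need not remain a neighborhood, so no uniform $\alpha$ would be available --- consistent with the fact that the Earle--Eells condition fails for general infinite-fibred smooth covering maps.
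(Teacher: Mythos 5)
Your proof is correct and follows essentially the same route as the paper's: an evenly covered neighborhood with finitely many sheets, continuity of $x\mapsto\mu(x)=\|df(x)^{-1}\|^{-1}$ to get a local lower bound $\alpha_i$ on a neighborhood $W_i$ of each fibre point, then the finite intersection $\bigcap_i f(W_i)$ and $\alpha=\min_i\alpha_i$, with injectivity on each sheet forcing every preimage point of the intersection into some $W_i$. If anything, your phrasing of the final step (every $x\in f^{-1}(V')$ lies in \emph{some} $W_i$) is slightly cleaner than the paper's, which states the inclusion $f^{-1}(V)\subset U_j$ for a single index $j$.
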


\begin{proof} 
For all $y\in Y$ there exists $W$ of $y$ such that $f^{-1}(W)$ is the union of a disjoint family of open sets $\{O_{x_1},\dots O_{x_n}\}$ of $X$, each of which is mapped diffeomorphically onto $W$ by $f$ and $f^{-1}(y)=\{x_1,\dots, x_n\}$.  Since $f$ is a local diffeomorphism and $x\mapsto \|df(x)^{-1}\|$ is continuous (see also Theorem 2.7 of \cite{palais})  for every $i=1,\dots, n$ there is a neighborhood $U_{i}\subset O_{x_i}$ of $x_i$ and $\alpha_i>0$ such that $\mu(x)=\|df(x)^{-1}\|^{-1}\geq \alpha_i$ for all $x\in U_i$. Let $\alpha=\min\{\alpha_1,\dots,\alpha_n\}$ and $V=\cap_{i=1}^n f(U_{i})$. Let $u\in f^{-1}(\cap_{i=1}^n f(U_{i}))$ and let $y=f(u)\in W$. The fibre of $y$ is contained in the disjoint union of open sets $\cup_{i=1}^n O_{x_i}$, then $u$ is in some $O_{x_j}$.  Suppose that $u\notin U_j$ thus $f(u)\notin f(U_j)$ since $f$ is injective in $O_{x_j}$. Therefore, $f(u)\notin\cap_{i=1}^n f(U_{i})$ and we get a contradiction. So, $f^{-1}(V)\subset U_j$ and $\mu(x)\geq \alpha_j\geq \alpha$ for all $x\in f^{-1}(V)$.
\end{proof}

\begin{lemma}\label{hadamardimpliesplastock}
Let $f:X\rightarrow Y$ be a Fredholm map of index zero between  connected Finsler manifolds satisfying {\rm \ref{hadamardintegral}}. Then:
\begin{enumerate}
\item[\buletita] $f$ is a smooth covering map.
\item[\buletita] $Y$ is complete.
\end{enumerate}
\end{lemma}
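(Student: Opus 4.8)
The plan is to distill from \ref{hadamardintegral} a single differential inequality that controls \emph{every} local lift, and then read off both conclusions from it. Write $\eta(\rho)=\inf_{d_X(x_0,x)\leq\rho}\mu(x)$, so that $\varrho(r)=\int_0^r\eta(\rho)\,d\rho$. The map $\rho\mapsto\eta(\rho)$ is nonincreasing, and since $\lim_{r\to\infty}\varrho(r)=\infty$ it cannot vanish at any finite $\rho$ (otherwise $\eta$ would be identically zero past that point and $\varrho$ would stay bounded). Hence $\eta(\rho)>0$ for all $\rho\geq 0$, so \ref{infinitedim} holds, $f$ is a local diffeomorphism, and $\varrho\colon[0,\infty)\to[0,\infty)$ is a continuous, strictly increasing bijection, i.e.\ a homeomorphism with continuous inverse. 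I also record that $\mu(x)\geq\eta(d_X(x_0,x))$ for every $x$, by monotonicity of $\eta$.

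Next I would establish the central estimate. Let $p$ be a $C^1$ path in $Y$ with $p(0)\in f(X)$ and let $q$ be a local lift starting at some $x_1$ with $f(x_1)=p(0)$, defined on a maximal interval $[0,\varepsilon)$. Put $\sigma(t)=d_X(x_0,q(t))$; this is Lipschitz with $|\sigma'(t)|\leq\|\dot q(t)\|$ a.e. Since $f$ is a local diffeomorphism, $\dot p(t)=df(q(t))\dot q(t)$, so $\|\dot p(t)\|\geq\mu(q(t))\|\dot q(t)\|\geq\eta(\sigma(t))\|\dot q(t)\|$ (cf.\ \eqref{meanvalue}). Combining this with $|\sigma'|\le\|\dot q\|$ and $\eta\geq 0$ to absorb the sign of $\sigma'$, one obtains a.e.
\[
\frac{d}{dt}\,\varrho(\sigma(t))=\eta(\sigma(t))\,\sigma'(t)\leq\eta(\sigma(t))\,\|\dot q(t)\|\leq\|\dot p(t)\|,
\]
whence $\varrho(\sigma(t))\leq\varrho(\sigma(0))+\ell(p)$ on $[0,\varepsilon)$. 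Because $\varrho$ is a homeomorphism, $\sigma$ stays bounded by $r_1:=\varrho^{-1}\!\big(\varrho(\sigma(0))+\ell(p)\big)$, so the lift remains in $\overline{B}_{r_1}(x_0)$ where $\mu\geq\eta(r_1)>0$, and therefore $\ell(q)\leq\eta(r_1)^{-1}\ell(p)<\infty$. Completeness of $X$ then makes $\{q(t_n)\}$ Cauchy for any $t_n\to\varepsilon$, and the local homeomorphism property lets the lift be continued, so $q$ extends over the whole parameter interval of $p$. Taking $x_1=x_0$ and a path with $\ell(p)<\varrho(r)$ forces $\varrho(\sigma(t))<\varrho(r)$, i.e.\ $\sigma(t)<r$; this is exactly \eqref{janjohansson}.

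The first conclusion is now immediate. The estimate says precisely that every local lift of a $C^1$ path has finite length and hence a convergent subsequence along any $t_n\to\varepsilon$, which is the continuation property for $C^1$ paths. Since $f$ is a local diffeomorphism and $Y$ is a connected $C^1$ manifold, this property is equivalent to $f$ being a covering map (the characterization recalled in Section~\ref{topologicalcharacterization}, from \cite{oliviajesuscoverings}), with surjectivity supplied by Remark~\ref{conexidadimplicasobre}; smoothness of the charts upgrades this to a \emph{smooth} covering map. For completeness of $Y$ I would argue metrically, since balls in $X$ need not be relatively compact. Given a Cauchy sequence $\{y_n\}$ in $Y$, pass to a subsequence with $d_Y(y_n,y_{n+1})<2^{-n}$, join consecutive terms by paths $p_n$ of length $<2^{-n}$ (here $Y$ is a length space), and concatenate them into a piecewise $C^1$ path $P\colon[0,1)\to Y$ of length $\ell(P)<\infty$ with $P(s)\to\lim_n y_n$. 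As $f$ is now a covering map it lifts $P$ from a point of $f^{-1}(y_1)$; applying the central estimate piece by piece keeps $\sigma$ bounded and gives $\ell(Q)<\infty$, so $x_\infty:=\lim_{s\to1^-}Q(s)$ exists in $X$ by completeness, and continuity yields $f(x_\infty)=\lim_n y_n$. Thus the Cauchy sequence converges in $Y$.

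The main obstacle is the rigor of the differential inequality: one must justify the a.e.\ chain rule for $\varrho\circ\sigma$ with $\sigma$ merely Lipschitz and $\eta$ only nonincreasing, and—crucially—exploit that \ref{hadamardintegral} makes $\varrho$ a genuine homeomorphism, not just that $\mu$ is positive. This is the single place where the full strength of \ref{hadamardintegral} is used, and it is what turns an a priori unbounded lift into one trapped in a fixed ball $\overline{B}_{r_1}(x_0)$. The remaining ingredients, including the identification of $\mu$ with the lower Dini derivative that underlies the step $\|\dot p\|\ge\mu(q)\|\dot q\|$, are already available from \cite{john,oliviajesusisabel,oliviajesuscoverings}.
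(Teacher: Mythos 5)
Your argument is correct and follows essentially the same route as the paper: the paper's own proof reduces the covering-map claim to the lifting estimate \eqref{janjohansson} (deferred to \cite{oliviajesusisabel} and reproved in the weighted submersion setting as Theorem \ref{globalgraves}), and establishes completeness of $Y$ by the same concatenation-of-short-paths device you use. The one point you rightly flag --- justifying the a.e.\ chain rule for $\varrho\circ\sigma$ when $\eta$ is merely nonincreasing --- is resolved in the paper's Theorem \ref{globalgraves} by replacing the differential inequality with a partition argument (the estimate $\sum_{i}\eta(\rho_i)(\rho_i-\rho_{i-1})\leq\delta|w|$ over partitions matched through $\xi$), which can be imported verbatim into your proof.
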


\begin{proof}
Let $f$ be a Fredholm map of index zero satisfying the Hadamard integral condition. For the first two statements we can proceed as in the proof of Corollary 7 of \cite{oliviajesusisabel}. Only a sketch is given. If $p$ is a rectifiable path in $Y$ starting at $f(x_0)$ there exists $r>0$ such that $\ell(p)<\varrho(r)$. So $p$ can be lifted. Since every point can be joined to $f(x_0)$ by a rectifiable path then every rectifiable path in $Y$ can be lifted. Thus  $f$ is a smooth covering map.

Now, let $\{y_n\}$ be a Cauchy sequence in  $Y$. Let $\sigma_n$ be a path from $y_n$ to $y_{n+1}$ and $\sigma_0$ a path from $f(x_0)$ to $y_1$. Without loss of generality we can suppose that $\ell(\sigma_n)<2^{n}$. Now, for each $n\geq 1$ consider the path $p_n$ which is the concatenation of $\sigma_0, \sigma_1,\dots,\sigma_n$. Then $\ell(p_n)<d_Y(f(x_0),y_1)+2$. Let $r>0$ such that $$\varrho(r)>d_Y(f(x_0),y_1)+2.$$ Then $\ell(p_n)<\varrho$. Each $p_n$ can be lifted to a path $q_n$ contained in the ball with radius $r$ centered at $x_0$ such that  $q_n(0)=x_0$. Let $\alpha=\inf_{d_X(x_0,x)\leq r}\mu(x)>0$. If $\gamma_n$ is the restriction of $q_n$ such that $f(\gamma_n)=\sigma_n$,  by the chain rule $\ell(\gamma_n)\leq \alpha^{-1}\ell(\sigma_n)$. If $x_n=\gamma_n(0)$ for $n\geq 1$ then $d_X(x_n,x_{n+1})\leq\alpha^{-1} 2^{-n}$. Thus $\{x_n\}$ is a Cauchy sequence in $X$ and is therefore convergent, so $\{y_n\}$ is also convergent.
\end{proof}

\begin{lemma}\label{lemaconectionkatrielps}
Let $f:X\rightarrow Y$ be a Fredholm map of index $0$ between Hilbert spaces. Then the Katriel condition implies  that $F_y$  satisfies the  $\mbox{PS}$-condition for all $y\in Y$.
\end{lemma}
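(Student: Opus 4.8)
The plan is to exploit the explicit form of the gradient of $F_y$ together with the lower bound on $\mu$ furnished by the Katriel condition. Since $X$ and $Y$ are Hilbert spaces, $F_y(x)=\frac12|f(x)-y|^2$ is $C^1$ with Fr\'echet differential $dF_y(x)w=\langle f(x)-y,df(x)w\rangle$, so its gradient is $\nabla F_y(x)=df(x)^*(f(x)-y)$. Condition \ref{katrielmain} forces $\mu(x)>0$ for every $x$, hence $f$ is a local diffeomorphism and each $df(x)$ is a linear isomorphism with $\sur df(x)=\mu(x)$. By the very definition of the surjectivity indicator, $|df(x)^*v^*|\geq \sur df(x)\,|v^*|$ for all $v^*$, and therefore
$$\|\nabla F_y(x)\|\geq \sur df(x)\,|f(x)-y|=\mu(x)\,|f(x)-y|.$$
This single inequality will drive the whole argument.

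First I would take an arbitrary $\mbox{PS}$-sequence $\{x_n\}$, so that $F_y(x_n)$ is bounded and $\|\nabla F_y(x_n)\|\to 0$. Boundedness of $F_y(x_n)$ means $|f(x_n)-y|\leq M$ for some $M$, so that $d_Y(f(x_n),y)<\varrho$ with $\varrho=M+1$. Applying condition \ref{katrielmain} with $y_0=y$ yields $\alpha:=\inf\{\mu(x):d_Y(f(x),y)<\varrho\}>0$, whence $\mu(x_n)\geq\alpha$ for all $n$. Plugging this into the displayed inequality gives $\alpha\,|f(x_n)-y|\leq\|\nabla F_y(x_n)\|\to 0$, so that $f(x_n)\to y$. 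In passing this recovers the observation that there are no $\mbox{PS}_c$-sequences for $F_y$ with $c\neq 0$, since it forces $F_y(x_n)=\frac12|f(x_n)-y|^2\to 0$.

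It remains to upgrade $f(x_n)\to y$ to convergence of $\{x_n\}$ itself; this is the only genuinely nontrivial point, because in infinite dimensions $f(x_n)\to y$ does not by itself control $x_n$. Here I would invoke the structural consequence of the Katriel condition already recorded in the excerpt: a Fredholm map of index zero satisfying \ref{katrielmain} between connected Finsler manifolds is a smooth covering map, and since $Y$ is a Hilbert space it is simply connected, so $f$ is in fact a global diffeomorphism. The inverse $f^{-1}$ is then continuous, and $f(x_n)\to y$ gives $x_n=f^{-1}(f(x_n))\to f^{-1}(y)$. Thus every $\mbox{PS}$-sequence converges, its limit being the unique minimizer of $F_y$, which is exactly the $\mbox{PS}$-condition for $F_y$; as $y$ was arbitrary, $F_y$ satisfies $\mbox{PS}$ for all $y\in Y$.

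The main obstacle is precisely this last step: the gradient estimate alone yields only $f(x_n)\to y$, and passing from this to a convergent (sub)sequence requires the global injectivity of $f$ and the continuity of its global inverse. This is why the covering-map/simply-connected conclusion of the Katriel condition is essential rather than cosmetic, and why the hypothesis that $Y$ be a Hilbert space enters in an indispensable way through its simple connectedness.
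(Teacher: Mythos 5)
Your proposal is correct and follows essentially the same route as the paper: the same key inequality $\|\nabla F_y(x)\|=|df(x)^*(f(x)-y)|\geq \sur df(x)\,|f(x)-y|$, the same use of the Katriel condition to get a uniform lower bound $\alpha$ on $\mu$ along the sequence (forcing $f(x_n)\to y$), and the same final appeal to the fact that the Katriel condition makes $f$ a global diffeomorphism so that $x_n\to f^{-1}(y)$. The only difference is organizational: the paper phrases the argument as ``no $\mbox{PS}_c$-sequences for $c>0$'' plus ``$\mbox{PS}_0$-sequences converge trivially,'' whereas you treat a single arbitrary $\mbox{PS}$-sequence, which is the same content.
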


\begin{proof}
It is well known that the PS-condition is equivalent to the $\mbox{PS}_c$-condition for any real $c$. 
Since $F_y$ is a non-negative function, it is enough to prove that for all $y\in Y$, $F_y$ satisfies the $\mbox{PS}_c$-condition for any real $c\geq 0$. We shall prove first that {\it there are no $\mbox{PS}_c$-sequences for $F_y$ with $c>0$}. Actually, suppose that there is a $\mbox{PS}_c$-sequence $\{x_n\}$ for $F_y$ with $c>0$. Then there exists $\varrho>0$ such that $F_y(x_n)<\varrho$. Since $f$ satisfies the Katriel condition then  $\inf\{\sur f(x):F_y(x)<\varrho\}>0$. So $\sur f(x_n)\geq\alpha$ for all $n$ and some $\alpha>0$. Therefore $$|f(x_n)-y|<\alpha^{-1}|\nabla F_y(x_n)|=\alpha^{-1}|df(x_n)^*(f(x_n)-y)|.$$ Thus $F_y(x_n)\rightarrow 0$, so  we get a contradiction. Finally, it is easy to see that {\it every $\mbox{PS}_0$-sequence for $F_y$ converges trivially}. If $\{x_n\}$ is a  $\mbox{PS}_0$-sequence  then $f(x_n)\rightarrow y$. Since the Katriel condition implies that $f$ is a global diffeomorphism then $x_n\rightarrow f^{-1}y$.
\end{proof}

\begin{theorem}\label{globalgraves}
Let $X$ be a $C^{2}$ complete and connected Finsler manifold, let $F$ be a Banach space, and  let $f:X\rightarrow F$ be a  submersion with uniformly split kernels and a locally Lipschitz derivative. Let $c$ be the constant satisfying \eqref{constantrabier} and $x_0\in X$. For each $\rho>0$ let $$\eta(\rho)=\frac{1}{c}\inf_{d_X(x_0,x)\leq \rho}\sur df(x).$$ Given $r>0$ set $\varrho(r)=\int_{0}^r \eta(\rho) d\rho.$ If $\varrho=\varrho(r)>0$ then
$$B_\varrho(f(x_0))\subset f(B_r(x_0)).$$
\end{theorem}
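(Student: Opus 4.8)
The plan is to adapt the length-control argument behind \eqref{janjohansson} to the submersion setting, replacing the flow of the Wa\.zewski equation of a local diffeomorphism by the horizontal lifting relative to a locally Lipschitz right inverse $s$ of $df$. Fix $y\in B_\varrho(f(x_0))$, so that $w:=y-f(x_0)$ satisfies $|w|<\varrho=\varrho(r)$, and consider the line segment $p(t)=f(x_0)+tw$, $t\in[0,1]$, which joins $f(x_0)$ to $y$ relative to the identity chart of the Banach space $F$. Since $f$ is a submersion with uniformly split kernels and locally Lipschitz derivative, the construction recalled after \eqref{constantrabier} provides a locally Lipschitz right inverse $s$ of $df$ with $\sur df(x)\le c\,\|s(x)\|^{-1}$. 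As in the Earle--Eells argument, the horizontal lifting of $p$ starting at $x_0$ is the unique maximal solution $q$ of the autonomous initial value problem $\dot q(t)=s(q(t))\,w$, $q(0)=x_0$, defined on some interval $[0,T)$. I would show that $q$ is defined and remains in $B_r(x_0)$ up to $t=1$; then $q(1)\in B_r(x_0)$ and $f(q(1))=p(1)=y$, giving $y\in f(B_r(x_0))$.

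The heart of the argument is an a priori length estimate. Writing $R(t)=\ell(q|_{[0,t]})=\int_0^t\|\dot q(\tau)\|\,d\tau$, one has $d_X(x_0,q(t))\le R(t)$, and along the lifting
\[ \|\dot q(t)\|=\|s(q(t))w\|\le\|s(q(t))\|\,|w|\le \frac{|w|}{\eta\big(d_X(x_0,q(t))\big)}\le\frac{|w|}{\eta(R(t))}, \]
where the second inequality uses \eqref{constantrabier} together with $\inf_{d_X(x_0,x)\le\rho}\sur df(x)=c\,\eta(\rho)$, and the last uses that $\rho\mapsto\eta(\rho)$ is nonincreasing (being the infimum of $\sur df$ over balls of increasing radius). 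Hence $\eta(R(t))\,R'(t)\le|w|$ on $[0,T)$, and integrating with the change of variable $u=R(\tau)$ (legitimate since $R$ is $C^1$ and nondecreasing) yields
\[ \varrho(R(t))=\int_0^{R(t)}\eta(u)\,du\le |w|\,t\le|w|<\varrho=\varrho(r). \]
Because $\varrho$ is nondecreasing, $R(t)\ge r$ would force $\varrho(R(t))\ge\varrho(r)$, a contradiction; therefore $d_X(x_0,q(t))\le R(t)<r$ for every $t$ in the maximal interval, so $q$ stays inside $B_r(x_0)$.

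It remains to continue $q$ up to $t=1$, and here completeness enters. Since $d_X(q(t_1),q(t_2))\le|R(t_2)-R(t_1)|$ and $R$ is bounded by $r$ and monotone, $q(t)$ is Cauchy as $t\uparrow T$, hence converges in $X$ by completeness. If $T\le 1$, restarting the locally Lipschitz ODE at the limit point extends $q$ beyond $T$, contradicting maximality; thus the maximal interval contains $[0,1]$ and the estimate holds there. This is exactly the device used in the proof of Lemma \ref{hadamardimpliesplastock}, now carried out for the horizontal flow. As $y$ was an arbitrary point of $B_\varrho(f(x_0))$, we obtain $B_\varrho(f(x_0))\subset f(B_r(x_0))$; note that $\varrho>0$ guarantees the target ball is nonempty, while $\sur df(x_0)>0$ gives $\eta(0)>0$, so that liftings exist locally to begin with.

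The step I expect to be the main obstacle is not any single estimate but their coupling: one cannot bound the length of $q$ without knowing it stays in $B_r(x_0)$, yet the extension of $q$ to the full parameter interval is precisely what the length bound is meant to secure. The resolution is to run the differential inequality on the open maximal interval, where $q$ is already known to exist, deriving the a priori bound $R(t)<r$ there, and only afterwards invoking completeness to rule out finite maximal time. A secondary technical point to check is the validity of the substitution $\int_0^t\eta(R(\tau))R'(\tau)\,d\tau=\int_0^{R(t)}\eta(u)\,du$ for the merely monotone (hence Borel, locally bounded) integrand $\eta$ and the $C^1$ nondecreasing $R$, together with the harmless fact that $\eta$ need not stay positive beyond $r$, which is immaterial since only its values on $[0,r)$ enter.
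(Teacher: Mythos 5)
Your proof is correct and follows the paper's overall strategy: lift the segment $p(t)=f(x_0)+tw$ by the flow of $\dot q=s(q)w$, control the speed of $q$ through \eqref{constantrabier}, obtain an a priori bound confining $q$ to $B_r(x_0)$, and use completeness of $X$ to push the maximal interval past $t=1$. Where you genuinely diverge is in how the confinement estimate is produced. The paper argues by contradiction at the first exit time $\delta$ from $B_r(x_0)$, introduces the running maximum $\xi(\rho)=\max\{d_X(q(t),x_0):t\in[0,\rho]\}$, proves the increment bound $\xi(\rho)-\xi(\rho')\leq |w|(\rho-\rho')/\eta(\xi(\rho))$, and passes to the integral $\int_0^{\xi(\delta)}\eta\leq\delta|w|$ by summing over partitions; it then needs a separate reduction for the degenerate case $\eta(r)=0$. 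You instead work with the arc length $R(t)=\ell(q|_{[0,t]})$, which is $C^1$, nondecreasing, and dominates $d_X(x_0,q(t))$, so the pointwise inequality $\eta(R(t))R'(t)\leq|w|$ integrates by a one-line change of variables to $\varrho(R(t))\leq|w|\,t<\varrho(r)$, and monotonicity of $\varrho$ yields $R(t)<r$ directly --- no contradiction argument, no Riemann-sum machinery, and no case split (if $\eta$ vanishes on $[r_0,r]$ your inequality automatically forces $R(t)<r_0$, which is exactly what the paper's second case achieves by shrinking $r$). This is a genuine streamlining of the paper's \eqref{meanvaluesubmersions}-based argument. One presentational caveat: state the speed bound multiplicatively, $\eta(R(t))\,\|\dot q(t)\|\leq\eta\bigl(d_X(x_0,q(t))\bigr)\,\|\dot q(t)\|\leq\|s(q(t))\|^{-1}\|\dot q(t)\|\leq|w|$, rather than dividing by $\eta$, since $\eta$ may vanish strictly before $r$ (not only beyond $r$, as you suggest); the integrated form you actually use is unaffected, and the change of variables is indeed valid for the bounded monotone integrand $\eta$ against the $C^1$ nondecreasing $R$.
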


\begin{proof}
Let $s$ be a locally Lipschitz right inverse of $df$ satisfying \eqref{constantrabier}. For  $w\in F$ the mapping $s_w(\cdot)=s(\cdot)w$ is a locally Lipchitz section. Therefore for every $x\in X$ there is a unique semi-flow $q(t,x, w)$ characterized by:
\begin{eqnarray*}
\frac{\partial q}{\partial t}(t,x,w) &=& s(q(t,x,w))w,\\
q(0,x,w) &=& x.
\end{eqnarray*}
and defined over a maximal interval  $J(x,w)=(a(x,w),b(x,w))\subset\mathbb{R}$ containing $0$ \cite[p. 116]{palais}. The continuous dependence upon parameters $x$ and $w$ implies that the set $$\Omega=\bigcup_{(x,w)\in X\times F}(a(x,w),b(x,w))\times \{(x,w)\}$$ is open in $\mathbb{R}\times X\times F$ and $q:\Omega\rightarrow X$ is continuous.
Furthermore, we have that  $$f(q(t,x,w))=f(x)+tw\mbox{ for all } t\in J(x,w).$$
since $\partial(f\circ q)(t,x,w)/\partial t=df(q(t,x,w))s(q(t,x,w))w=w$.

\

\noindent{\bf Claim:} Retain the hypothesis of Theorem \ref{globalgraves}. Let $0<|w|<\varrho=\varrho(r)$. Then:

\begin{enumerate}
\item $d_X(q(t,x_0,w),x_0)<r$ for all $t\in [0,1)$.
\item $b(x_0,w)>1$.
\end{enumerate}

\

\noindent {\it Proof of Claim}. Case $\eta(r)>0$.  Suppose that (1) is not true. Let $q(t)=q(t,x_0,x)$ and $$\delta=\inf\{t\in [0,1):d_X(q(t),x_0)\geq r\}<1.$$ Note that $d_X(q(\delta),x_0)=r$. Let $\xi(\rho)=\max\{d_X(q(t),x_0):t\in[0,\rho]\}$. The function $\xi$ is continuous and nondecreasing and for every $\rho\in(0,\delta]$ we have that $0<\xi(\rho)\leq r$ and then $0<\eta(\xi(\rho))<\infty$. We claim that if $0\leq\rho'<\rho\leq\delta$ then: 
$$\xi(\rho)-\xi(\rho')\leq\frac{|w|(\rho-\rho')}{\eta(\xi(\rho))}.$$
If $\xi(\rho')=\xi(\rho)$ this inequality is evident. Suppose now that $\xi(\rho')<\xi(\rho)$, there exists some $\rho^*\in(\rho',\rho]$ such that $\xi(\rho)=d_X(q(\rho^*),x_0)$. By \eqref{constantrabier} for fixed $t\in[\rho',\rho^*]$ we have: 
$$\sur df(q(t))\|\dot q(t)\|\leq c\|s(q(t))\|^{-1}\|\dot q(t)\|=c\|s(q(t))\|^{-1}\|s(q(t))w\|\leq c|w|.$$
So,
 $$\inf_{[\rho',\rho^*]}\sur df(q(t))\int_{\rho'}^{\rho^*}\|\dot q(t)\|dt\leq c\int_{\rho'}^{\rho^*}|w|dt.$$
 Then, \begin{equation}\label{meanvaluesubmersions}d(q(\rho'),q(\rho^*))\inf_{[\rho',\rho^*]}\frac{1}{c}\sur df(q(t))\leq |w|(\rho^*-\rho').\end{equation} Therefore: 
 $$d(q(\rho'),q(\rho^*))\cdot \eta(\xi(\rho^*))\leq |w|(\rho^*-\rho')$$
and 
$$\xi(\rho)=d_X(q(\rho^*),x_0)\leq d(q(\rho'),x_0)+\frac{|w|(\rho^*-\rho)}{\eta(\xi(\rho^*))}\leq \xi(\rho')+\frac{|w|(\rho'-\rho)}{\eta(\xi(\rho))}.$$
This establishes the claim. Now, for each partition $0=\rho_0<\rho_1<\cdots \rho_n=\xi(\delta)$, we can find $0=\tau_0<\tau_1<\cdots \tau_n=\delta$ such that $\rho_i=\xi(\tau_i)$ for each $i=0,1,\dots,n$. Then,
$$\sum_{i=1}^n\eta(\rho_i)(\rho_i-\rho_{i-1})=\sum_{i=1}^n\eta(\xi(\tau_i))(\xi(\tau_i)-\xi(\tau_{i-1}))\leq\sum_{i=1}^n|w|(\tau_i-\tau_{i-1})=\delta|w|<\varrho.$$
Therefore, $$\int_{0}^{d_X(q(\delta),x_0)}\eta(\rho)d\rho\leq\int_0^{\xi(\delta)}\eta(\rho)d\rho<\int_0^r\eta(\rho)d\rho.$$
So $d_X(q(\delta),x_0)< r$, which is a contradiction. 

Now suppose by contradiction that $b(x_0,w)\leq 1$.   By (1) for all $t\in [0,b(x_0,w))$:
$$0<\eta(r)<\frac{1}{c}\sur(q(t,x_0,w))$$  Now, if $\{t_n\}$ is an increasing sequence in $[0,b(x_0,w))$ convergent to $b(x_0,w)$, by \eqref{meanvaluesubmersions} we obtain that for $m\geq n$: 
$$d_X(q(t_n),q(t_m))\leq\frac{|w|(t_m-t_n)}{\eta(r)}.$$ Therefore, $\{q(t_n)\}$ is a Cauchy sequence and then is convergent by completeness of $X$ and the definition of the Finsler metric in a Finsler manifold. So $$x=\lim_{t\rightarrow b(x_0,w)^-} q(t,x_0,w)\in X.$$ Therefore $q(t,x_0,w)$ could then be extended to values $t>b(x_0,w)$ contradicting the maximality of $J(x,w)$.

We consider now the case $\eta(r)=0$. Then $r\geq r_0=\sup\{\rho>0:\eta(\rho)>0\}$ and we have that 
$$\varrho=\int_0^r\eta(\rho)d\rho=\int_0^{r_0}\eta(\rho)d\rho.$$ If $0<|w|<\varrho$ we can choose $r'$ and $\varrho'$ such that $\eta(r')>0$, $0<|w|<\rho'<\rho$ and $\varrho(r')=\varrho'$. Then by the previous case we obtain that $d_X(q(t,x_0,w),x_0)<r'<r$, for all $t\in[0,1)$ and $b(x_0,w)>1$.

\

Let $y\in B_\varrho(f(x_0))$. Then there is $w$ with $|w|<\varrho$ such that $y=f(x_0)+w$. By the above arguments the path $p(t)=f(x_0)+tw$ can be lifted to a path 
$q(t,x_0,w)$ in $B_r(x_0)$. In particular $y=f(q(1,x_0,w))\in f(B_r(x_0))$.

\end{proof}
\bibliographystyle{plain}
\bibliography{references}

%--- Checar http://www.math.ksu.edu/~ramm/papers/601j.pdf

\end{document}